\numberwithin{equation}{section}
\DeclareMathAlphabet{\cat}{OT1}{cmss}{m}{sl}
\newtheorem{theorem}[equation]{Theorem}
\newtheorem{proposition}[equation]{Proposition}
\newtheorem{lemma}[equation]{Lemma}
\newtheorem{corollary}[equation]{Corollary}
\theoremstyle{definition}
\newtheorem{remark}[equation]{Remark}
\newtheorem{example}[equation]{Example}
\newtheorem{dfn}[equation]{Definition}
\newtheorem{notation}[equation]{Notation}
\newcommand{\tens}{\otimes}
\newcommand{\iso}{\stackrel{\sim}{\to}}
\newcommand{\id}{\mathrm{id}}
\newcommand{\colim}{\operatorname{colim}}
\newcommand{\CH}{\operatorname{CH}}
\newcommand{\CK}{\operatorname{CK}}
\renewcommand{\Im}{\operatorname{Im}}
\newcommand{\Ker}{\operatorname{Ker}}
\newcommand{\Coker}{\operatorname{Coker}}
\newcommand{\Colim}{\operatorname{colim}}
\newcommand{\B}{\operatorname{B\!}}
\newcommand{\tto}{\to\hspace{-0.5cm}\to}
\newcommand{\ch}{\operatorname{char}}
\newcommand{\Spec}{\operatorname{Spec}}
\newcommand{\Proj}{\operatorname{Proj}}
\newcommand{\SL}{\operatorname{SL}}
\renewcommand{\O}{\operatorname{O}}
\newcommand{\xra}{\xrightarrow}
\newcommand{\Coprod}{\operatornamewithlimits{\textstyle\coprod}}
\newcommand{\Prod}{\operatornamewithlimits{\textstyle\prod}}
\newcommand{\Sum}{\operatornamewithlimits{\textstyle\sum}}
\newcommand{\Tan}{T}
\newcommand{\cTan}{\Tan^{\vee}}
\newcommand{\Pp}{\mathbb{P}}
\newcommand{\Pu}{\mathbb{P}^1}
\newcommand{\Au}{\mathbb{A}^1}
\newcommand{\Z}{\mathbb{Z}}
\newcommand{\N}{\mathbb{N}}
\newcommand{\cM}{\mathcal M}
\newcommand{\cO}{\mathcal O}
\newcommand{\cZ}{\mathcal Z}
\newcommand{\cD}{\mathcal D}
\newcommand{\cC}{\mathcal C}
\newcommand{\cF}{\mathcal F}
\DeclareMathOperator{\characteristic}{char}
\title
[Connective $K$-theory and Adams operations] 
{Connective $K$-theory and Adams operations}
\author[O.~Haution]
{Olivier Haution}
\address{Olivier Haution\\
Mathematisches Institut der Ludwig-Maximilians-Universit\"at M\"unchen\\ Theresienstr.\ 39\\
D-80333 M\"unchen, Germany}
\email
{olivier.haution@gmail.com, {\it web page}: https://haution.github.io/}
\author[A.~Merkurjev]
{Alexander S. Merkurjev}
\address{Alexander Merkurjev\\
        Department of Mathematics \\
        University of California\\
        Los Angeles, CA \\
         USA}
\email
{merkurev@math.ucla.edu, {\it web page}: https://math.ucla.edu/\~{ }merkurev}
\begin{document}

\begin{abstract}
We investigate the relations between the Grothendieck group of coherent modules of an algebraic variety and its Chow group of algebraic cycles modulo rational equivalence. Those are in essence torsion phenomena, which we attempt to control by considering the action of the Adams operations on the Brown--Gersten--Quillen spectral sequence and related objects, such as connective $K_0$-theory. We provide elementary arguments whenever possible. As applications, we compute the connective $K_0$-theory of the following objects: (1) the variety of reduced norm one elements in a central division algebra of prime degree; (2) the classifying space of the split special orthogonal group of odd degree.
\end{abstract}

\thanks{The first author has been supported by DFG grant HA 7702/5-1 and Heisenberg fellowship HA 7702/4-1. The second author has been supported by the NSF grant DMS \#1801530.}

\maketitle

\section{Introduction}

The goal of the paper is to illustrate the usefulness of the connective $K_0$-groups of an algebraic variety $X$ and Adams operations for the study of relations between $K$-theory and the Chow groups of $X$.

For every integer $i$, denote $\cM_i(X)$ the abelian category of coherent $\cO_X$-modules with dimension of support at most $i$. We have a filtration $(\cM_i(X))$ of the category $\cM(X)$ of all coherent $\cO_X$-modules such that $\cM_i(X)=0$ if $i<0$ and $\cM_i(X)=\cM(X)$ if $i\geq \dim(X)$.

The $K$-groups of $\cM(X)$ are denoted $K'_n(X)$. The exact couple $(D_{r,s}, E_{r,s})$ of homological type with
\[
D_{r,s}^1=K_{r+s}(\cM_r(X))\quad \text{and} \quad E_{r,s}^1=\Coprod_{x\in X_{(r)}}K_{r+s}F(x),
\]
where $X_{(r)}$ denotes the set of points in $X$ of dimension $r$, yields the Brown-Gersten-Quillen (BGQ) spectral sequence
\[
\Coprod_{x\in X_{(r)}}K_{r+s}F(x)\Rightarrow K'_{r+s}(X)
\]
with respect to the topological filtration $K'_n(X)_{(i)}=\Im(K_n(\cM_i(X))\to K_n(\cM(X)))$ on $K'_n(X)$.

The group $K_0'(X)$ coincides with the Grothendieck group of coherent $\cO_X$-modules. The terms $E_{i,-i}^2=\CH_i(X)$ of the second page are the \emph{Chow groups} of classes of dimension $i$ algebraic cycles on $X$. The natural surjective homomorphism
\[
\varphi_i:\CH_i(X)\tto K'_0(X)_{(i/i-1)}:=K'_0(X)_{(i)}/K'_0(X)_{(i-1)}
\]
takes the class $[Z]$ of an integral closed subvariety $Z\subset X$ of dimension $i$ to the class of $\cO_Z$. The kernel of $\varphi_i$ is covered by the images of the differentials in the spectral sequence with target in $\CH_i(X)$.

The groups
\[
\CK_i(X):=D^2_{i+1,-i-1}=\Im(K_0(\cM_i(X))\to K_0(\cM_{i+1}(X)))
\]
are the \emph{connective $K_0$-groups} of $X$ (see \cite{Cai08}). These groups are related to the Chow groups via exact sequences
\[
\CK_{i-1}(X)\to \CK_i(X)\to \CH_i(X) \to 0
\]
In the present paper we study differentials in the spectral sequence with target in the Chow groups via the connective $K_0$-groups. In Sections \ref{endomod} and \ref{engovar} we introduce and study the notion of an endo-module associated with an algebraic variety that locates a part of the BGQ spectral sequence near the zero diagonal.

In Section \ref{adams} we introduce an approach based on the Adams operations of homological type on the Grothendieck group. Compatibility of the Adams operations with the differentials in the spectral sequence was proved in \cite[Corollary 5.5]{Levine97} with the help of heavy machinery of higher $K$-theory. We give an elementary proof of the compatibility with the differential coming to the zero diagonal of the spectral sequence. The Adams operations are applied in Section \ref{applications} to the study of the kernel of the homomorphism $\varphi_i$, and of the relations between the Grothendieck group and its graded group with respect to the topological filtration.

In Section \ref{equivar} we consider the endo-module arising form the equivariant analog of the BGQ spectral sequence. As an example we compute the connective $K_0$-groups of the classifying space of the special orthogonal group $\O^+_n$ with $n$ odd and as an application compute the differentials in the spectral sequence.\\

We use the following notation in the paper. We fix a base field $F$. A variety is a separated scheme of finite type over $F$. The residue field of a variety at a point $x$ is denoted by $F(x)$, and the function field of an integral variety $X$ by $F(X)$. The tangent bundle of a smooth variety $X$ is denoted by $\Tan_X$.

\section{Endo-modules}\label{endomod}
\begin{dfn}
Let $R$ be a commutative ring and $B_\bullet$ a $\Z$-graded $R$-module. An endomorphism of $B_\bullet$ of degree $1$ is (an infinite) sequence of $R$-module homomorphisms
\[
\dots \xra{\beta_{i-2}} B_{i-1} \xra{\beta_{i-1}} B_{i} \xra{\ \beta_{i}\ } B_{i+1} \xra{\beta_{i+1}}\dots
\]
We call the pair $(B_\bullet, \beta_\bullet)$ an \emph{endo-module} over $R$. If $\beta_\bullet$ is clear from the context, we simply write $B_\bullet$ for $(B_\bullet, \beta_\bullet)$.
\end{dfn}

For an endo-module $(B_\bullet, \beta_\bullet)$ set
\[
A_i=\Ker(\beta_i)\quad\text{and}\quad C_i=\Coker(\beta_{i-1}).
\]
We have exact sequences
\[
0\to A_{i}\xra{\alpha_i} B_{i} \xra{\beta_{i}} B_{i+1} \xra {\gamma_{i+1}} C_{i+1} \to 0
\]
and (an infinite) diagram of $R$-module homomorphisms:
\[
\xymatrix{
\dots  &  A_{i-1}  \ar[d]^{\alpha_{i-1}} &  A_{i} \ar[d]^{\alpha_i} & A_{i+1}  \ar[d]^{\alpha_{i+1}} & \dots  \\
\dots \ar[r]^{\beta_{i-2}} &  B_{i-1}  \ar[r]^{\beta_{i-1}} \ar[d]^{\gamma_{i-1}} &  B_{i} \ar[r]^{\beta_{i}} \ar[d]^{\gamma_{i}} & B_{i+1}  \ar[r]^{\beta_{i+1}} \ar[d]^{\gamma_{i+1}} & \dots \\
\dots &   C_{i-1}  & C_{i}  & C_{i+1} & \dots
}
\]

The compositions $\delta_i=\gamma_{i}\circ \alpha_i:A_i\to C_i$ are called the \emph{differentials}.

We define the \emph{derived} endo-module $(B^{(1)}_\bullet, \beta^{(1)}_\bullet)$ of $(B_\bullet, \beta_\bullet)$ by
\[
B^{(1)}_i=\Im(\beta_i)\subset B_{i+1}\quad\text{and}\quad \beta^{(1)}_i=\beta_{i+1}|_{B^{(1)}_i}.
\]

Then the \emph{derivatives} of $A_i$'s and $C_i$'s are:
\begin{align*}
 A^{(1)}_i:= & \Ker(\beta^{(1)}_{i})\simeq\Ker(\delta_{i+1}), \\
 C^{(1)}_i:= & \Coker(\beta^{(1)}_{i-1})\simeq \Coker(\delta_i).
\end{align*}

For an integer $s>0$ denote $A_\bullet^{(s)}$, $B_\bullet^{(s)}$ and $C_\bullet^{(s)}$ the iterated $s$th derivatives of $A_\bullet$, $B_\bullet$ and $C_\bullet$.

\begin{example}
Let $(D^1_{r,s},E^1_{r,s})$ be an exact couple of $R$-modules (see \cite[\S 5.9]{Weibel94}) such that $D^1_{r,s}=0$ if $r+s<0$. The exact sequences
\[
E^1_{i+1,-i}\to D^1_{i,-i}\to D^1_{i+1,-i-1}\to E^1_{i+1,-i-1}\to 0
\]
for all $i$ yield an endo-module $B_i=D_{i,-i}$ over $R$. The associated group $A_i$ coincides with the image of the first homomorphism in the exact sequence and $C_i=E^1_{i,-i}$. The differential $E^1_{i+1,-i}\to D^1_{i,-i}\to E^1_{i,-i}$ on the first page of the spectral sequence associated with the exact couple factors into the composition $E^1_{i,-i}\tto A_i\xra{\delta_i} C_i$. The derived endo-module of $B_\bullet$ arises the same way from the derived exact couple. It follows that the differential $\delta^{(s)}$ in the $s$th derivative $B^{(s)}_\bullet$ correspond to the differentials in the $(s+1)$th page of the spectral sequence.
\end{example}

For an endo-module $B_\bullet$ write $H=H(B_\bullet):=\colim B_i$. For every $i$, denote $H_{(i)}$ the image of the canonical homomorphism $B_i\to H$. We have a filtration
\[
\dots\subset H_{(i-1)}\subset H_{(i)} \subset H_{(i+1)}\subset\dots
\]
of $H$. We would like to compute the subsequent factor modules
\[
H_{(i/i-1)}:= H_{(i)}/H_{(i-1)}
\]
in terms of the $C_i$'s.

There is a canonical surjective homomorphism
\[
\varepsilon_i:C_i=\Coker(\beta_{i-1})\tto  H_{(i/i-1)}
\]
defined via the diagram $C_i \leftarrow\!\!\!\!\!\leftarrow B_i\to H_{(i)}$ and a commutative diagram
\[
\xymatrix{
B_i  \ar@{->>}[d]_{\gamma_i} \ar@{->>}[r] & H_{(i)}  \ar@{->>}[d]  \\
C_i \ar@{->>}[r]^-{\varepsilon_i}     &  H_{(i/i-1)}.
}
\]

We have $H^{(1)}:=H(B^{(1)}_\bullet)=H(B_\bullet)=H$ and $H^{(1)}_{(i)}=H_{(i)}$ for all $i$. The homomorphism $\varphi_i$ factors into the composition
\[
C_i\tto C^{(1)}_i\xra{\varepsilon^{(1)}_i} H^{(1)}_{(i/i-1)}=H_{(i/i-1)}.
\]
We call the $\varepsilon^{(1)}_i$ the \emph{derivative} of $\varepsilon_i$.

Iterating we factor $\varepsilon_i$ into the composition
\[
C_i\tto C^{(1)}_i\tto C^{(2)}_i\tto\cdots \tto C^{(s)}_i\xra{\varepsilon^{(s)}_i} H_{(i/i-1)}
\]
that yields an isomorphism
\[
\underset{s}{\colim} \ C_i^{(s)} \iso H_{(i/i-1)}
\]
for every $i$. Recall that $C^{(s)}_i=\Coker(A^{(s-1)}_{i}\xra{\delta_{i}} C^{(s-1)}_{i})$.

We would like to find conditions on $B_\bullet$ such that for every $i$ the iterated derivative $\varepsilon^{(s)}_i$ of sufficiently large order $s$ is an isomorphism.

\begin{dfn}
An endo-module $B_\bullet$ is called \emph{$d$-stable} for an integer $d$ if $A_i=0$ for all $i\geq d$. We say that $B_\bullet$ is \emph{stable} if $B_\bullet$ is $d$-stable for some $d$, $B_\bullet$ is \emph{degenerate} if $B_\bullet$ is $d$-stable for all $d$ (equivalently, the homomorphisms $B_i\to B_{i+1}$ are injective for all $i$, or all $A_i$ are zero) and $B_\bullet$ is \emph{bounded below} if $B_i=0$ for $i<\!\!<0$.
\end{dfn}
The following properties are straightforward.

\begin{lemma}\label{props}
Let $B_\bullet$ be an endo-module. Then
\begin{enumerate}
  \item If $B_\bullet$ is $d$-stable, then
\begin{enumerate}
\item The $s$th derivative $B^{(s)}_\bullet$ is $(d-s)$-stable,

\item $B^{(s)}_i=B_i$ and $C^{(s)}_i=C_i$ for $i\geq d$,

\item $\varepsilon^{(s)}_i:C^{(s)}_i\to H_{(i/i-1)}$ is an isomorphism if $i+s\geq d$.
\end{enumerate}

  \item If $B_\bullet$ is stable and bounded below, then $B_\bullet^{(s)}$ is degenerate for $s>\!\!>0$.
  \item If $B_\bullet$ is degenerate, then $\varepsilon_i$ is an isomorphism for all $i$. The converse holds if $B_\bullet$ is bounded below.
  \item If $B_\bullet$ is $d$-stable, bounded below and $C_i=0$ for all $i<d$, then $B_\bullet$ is degenerate.
\end{enumerate}
\end{lemma}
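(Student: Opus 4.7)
The overall plan rests on a dual description of $B^{(s)}_i$: while the definition presents it as the submodule $\Im(\beta^{(s-1)}_i) \subseteq B^{(s-1)}_{i+1} \subseteq \cdots \subseteq B_{i+s}$, unwinding the recursion shows that it also equals $\Im(\beta_{i+s-1} \circ \cdots \circ \beta_i)$, and is hence a quotient of $B_i$. The subobject picture immediately yields $A^{(s)}_i \subseteq A_{i+s}$, while the quotient picture yields the implication: if $B_i = 0$, then $B^{(s)}_i = 0$. The inclusion $A^{(s)}_i \subseteq A_{i+s}$ proves (1a). For (1b), when $i \geq d$ all of $\beta_i, \ldots, \beta_{i+s-1}$ are injective, so $B_i \to B_{i+s}$ is injective and identifies canonically with $B^{(s)}_i$; the same type of bound gives $A^{(s-1)}_i \subseteq A_{i+s-1} = 0$, forcing $\delta^{(s-1)}_i = 0$ and $C^{(s)}_i = C^{(s-1)}_i = \cdots = C_i$. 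For (1c), the factorisation of $\varepsilon_i$ through the $\varepsilon^{(s')}_i$ shows each $\varepsilon^{(s)}_i$ is surjective, and the previously established identification $\colim_{s'} C^{(s')}_i \cong H_{(i/i-1)}$ forces $\varepsilon^{(s)}_i$ to be an isomorphism once each successive map $C^{(s')}_i \to C^{(s'+1)}_i$ with $s' \geq s$ is; but $A^{(s')}_i \subseteq A_{i+s'} = 0$ as $i + s' \geq i + s \geq d$, hence $\delta^{(s')}_i = 0$ and the system stabilises at $C^{(s)}_i$.

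Part (2) combines (1a), giving $A^{(s)}_i = 0$ for $i \geq d - s$, with the quotient observation giving $A^{(s)}_i \subseteq B^{(s)}_i = 0$ for $i < d'$ (writing $d'$ for an integer with $B_i = 0$ whenever $i < d'$); for $s \geq d - d'$ these two ranges cover all of $\Z$, so $B^{(s)}_\bullet$ is degenerate. For the forward direction of (3), degeneracy turns the system into a chain of injections $\cdots \hookrightarrow B_{i-1} \hookrightarrow B_i \hookrightarrow \cdots$, so each $B_i \to H$ is injective, $H_{(i)} = B_i$, and $H_{(i)}/H_{(i-1)} = B_i/\Im(\beta_{i-1}) = C_i$ identifies $\varepsilon_i$ with the identity. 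For the converse, I induct on $j$ to show $B_j \to H$ is injective, the base $j < d'$ being vacuous: given $b \in B_j$ mapping to $0$ in $H$, its class $\overline b \in C_j$ vanishes in $H_{(j/j-1)}$, so the assumed iso $\varepsilon_j$ gives $b = \beta_{j-1}(b')$ for some $b' \in B_{j-1}$; but then $b'$ also maps to $0$ in $H$, whence induction yields $b' = 0$ and $b = 0$. Injectivity of $B_j \to H$ forces injectivity of $\beta_j$, completing degeneracy.

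Part (4) is the quickest. Assuming $B_\bullet \not\equiv 0$, let $d'$ be the smallest index with $B_{d'} \neq 0$. Then $C_{d'} = \Coker(0 \to B_{d'}) = B_{d'} \neq 0$, and the hypothesis $C_i = 0$ for $i < d$ forces $d' \geq d$; combined with $d$-stability this gives $A_i = 0$ for all $i$. The one real subtlety across the four parts is the quotient presentation of $B^{(s)}_i$: although the definition privileges the embedding into $B_{i+s}$, the transfer of boundedness below to higher derivatives used in (1b) and (2) depends on also viewing $B^{(s)}_i$ as a quotient of $B_i$ via iterated composition.
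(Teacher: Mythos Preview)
Your proof is correct. The paper itself offers no argument beyond the remark that the properties are ``straightforward,'' so there is no alternative approach to compare against; you have simply supplied the details the authors omitted, and your key observation --- that $B^{(s)}_i = \Im(\beta_{i+s-1}\circ\cdots\circ\beta_i)$ is simultaneously a submodule of $B_{i+s}$ (giving $A^{(s)}_i\subset A_{i+s}$) and a quotient of $B_i$ (giving $B_i=0\Rightarrow B^{(s)}_i=0$) --- is exactly the right organizing principle. One minor quibble: in your closing paragraph you say the ``transfer of boundedness below'' is used in (1b), but (1b) uses only the quotient presentation together with $d$-stability, not boundedness below; this is just a slip of phrasing and does not affect the mathematics.
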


\section{The endo-module of a variety}
\label{engovar}

\subsection{The endo-module \texorpdfstring{$B_i(X)$}{Bi(X)}}

Let $X$ be a variety. We will denote by $K_0'(X)$ (resp.\ $K_0(X)$) the Grothendieck group of the category $\cM(X)$ of coherent (resp.\ the category of locally free coherent) $\cO_X$-modules. The class of an $\cO_X$-module $M$ in either of these groups will be denoted by $[M]$. The tensor product endows $K_0(X)$ with a ring structure, and $K_0'(X)$ with a $K_0(X)$-module structure.
We will denote the latter by $(a,b) \mapsto a \cdot b$, where $a\in K_0(X)$ and $b\in K_0'(X)$.

For an integer $i$ denote $\cM_i(X)$ the abelian category of coherent $\cO_X$-modules of support dimension at most $i$. Clearly, $\cM_i(X)=0$ if $i<0$ and $\cM_i(X)=\cM(X)$ if $i\geq d=\dim(X)$.

\begin{dfn}
We define an endo-module $(B_\bullet(X), \beta_\bullet)$ over $\Z$ associated with $X$ as follows. Set
\[
B_i(X)=K_0(\cM_i(X))
\]
and let $\beta_{i-1}:B_{i-1}(X)\to B_{i}(X)$ be the homomorphism induced by the inclusion of $\cM_{i-1}(X)$ into $\cM_{i}(X)$.
\end{dfn}

We have $B_i(X)=0$ if $i<0$ and $B_i(X)=K'_0(X)$ if $i\geq d$, so the endo-module $B_\bullet(X)$ is bounded below and $d$-stable. Also
\[
B_i(X)=\colim K'_0(Z),
\]
where the colimit is taken over all closed subvarieties $Z\subset X$ of dimension at most $i$ with respect to the push-forward homomorphisms $K'_0(Z_1)\to K'_0(Z_2)$ for closed subvarieties $Z_1\subset Z_2$. The group $H=\colim B_i(X)$ coincides with $K'_0(X)$ and $H_{(i)}$ with the $i$th term $K'_0(X)_{(i)}$ of the topological filtration on $K'_0(X)$.

The factor category $\cM_{i}(X)/\cM_{i-1}(X)$ is isomorphic to the direct sum over all points $x\in X_{(i)}$ of the categories $\cM(\Spec F(x))$ (see \cite[\S 7]{Quillen73}). The localization exact sequence \cite[\S7]{Quillen73} looks then as follows:
\[
C_{i}(X,1)\xra{\partial_i} B_{i-1}(X) \xra{\beta_{i-1}} B_{i}(X)\to C_{i}(X)\to 0,
\]
where
\[
C_i(X,1) = \Coprod_{x \in X_{(i)}} F(x)^{\times}\quad\text{and}\quad C_i(X)=\Coker(\beta_{i-1})=\Coprod_{x\in X_{(i)}}\Z
\]
is the group of algebraic cycles of dimension $i$. The groups $A_i(X)$ associated with the endo-module $B_\bullet(X)$
are given then by
\begin{equation}\label{formulafor}
A_i(X)=\Ker(\beta_{i})=\Im(\partial_{i+1}).
\end{equation}

If $f\colon Y \to X$ is a proper morphism, there are homomorphisms $f_* \colon B_i(Y) \to B_i(X)$. There are also homomorphisms $f_* \colon C_i(Y,1) \to C_i(X,1)$, defined by letting the homomorphism $F(y)^\times \to F(x)^\times$ be trivial unless $f(y) =x$, in which case it is given by the norm of the finite degree field extension $F(y)/F(x)$
(see \cite[\S 1.4]{Fulton98}). We have
\begin{equation}
\label{eq:partial_push}
\partial_i \circ f_* = f_* \circ \partial_i.
\end{equation}

If $f\colon Y \to X$ is a flat morphism of relative dimension $r$, there are homomorphisms $f^* \colon B_i(X) \to B_{i+r}(Y)$. There are also homomorphisms $f^* \colon C_i(X,1) \to C_{i+r}(Y,1)$, defined by letting the homomorphism $F(x)^\times \to F(y)^\times$ be trivial unless $f(y) =x$, in which case it is given by the inclusion $F(x) \subset F(y)$
(see \cite[\S 1.7]{Fulton98}). We have
\begin{equation}
\label{eq:partial_pull}
\partial_{i+r} \circ f^* = f^* \circ \partial_i.
\end{equation}

\subsection{Connective \texorpdfstring{$K$}{K}-groups}
\begin{dfn}
The derivatives $B_i(X)^{(1)}$ of $B_i(X)$ are the \emph{connective $K$-groups} $\CK_i(X)$ and $C_i(X)^{(1)}$ are the \emph{Chow groups} $\CH_i(X)$ of classes of cycles of dimension $i$ (see \cite{Cai08}).
\end{dfn}
We have the exact sequences
\[
\CK_{i-1}(X)\xra{\beta} \CK_i(X) \to \CH_i(X) \to 0,
\]
where $\beta=\beta_{i-1}^{(1)}$'s are called the \emph{Bott homomorphisms}.

We can view the graded group $\CK_\bullet(X)$ as a module over the polynomial ring $\Z[\beta]$. It follows from the definition that
\begin{equation}\label{twoiso}
\CK_\bullet(X)/\beta \CK_\bullet(X)\simeq \CH_\bullet(X)\quad\text{and}\quad \CK_\bullet(X)/(\beta-1)\CK_\bullet(X)\simeq K'_0(X).
\end{equation}

For every $i\geq 0$ the (surjective) homomorphism
\[
\varphi_i:=\varepsilon^{(1)}_i: \CH_i(X)\tto K'_0(X)_{(i/i-1)}
\]
takes the class $[Z]$ of an integral closed subvariety $Z\subset X$ of dimension $i$ to the class of $\cO_Z$. The relations between the groups $\CK_i(X)$, $\CH_i(X)$ and $K'_0(X)_{(i)}$ are given by a commutative diagram
\[
\xymatrix{
\CK_i(X)  \ar@{->>}[d]_{\gamma_i} \ar@{->>}[r] & K'_0(X)_{(i)}  \ar@{->>}[d]  \\
\CH_i(X) \ar@{->>}[r]^-{\varphi_i}     &  K'_0(X)_{(i/i-1)}.
}
\]

The goal is to study the homomorphisms $\varphi_i$. Recall that $C_i(X)^{(1)}=\CH_i(X)$ and the groups $C_i(X)^{(s)}$ are inductively defined via the exact sequences
\[
A_i(X)^{(s)}\xra{\delta^{(s)}_i} C_i(X)^{(s)} \to C_i(X)^{(s+1)} \to 0.
\]

\begin{proposition}\label{comp}
The homomorphism $\varphi_i$ factors as the composition
\[
\CH_i(X)=C_i(X)^{(1)}\tto C_i(X)^{(s)}\xra{\varepsilon^{(s)}_i} K'_0(X)_{(i/i-1)},
\]
where $\varepsilon^{(s)}_i$ is an isomorphism if $s\geq d-i$.
\end{proposition}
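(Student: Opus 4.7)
The plan is to observe that this proposition is essentially a direct application of Lemma~\ref{props} to the endo-module $B_\bullet(X)$, once we check the stability hypothesis and interpret $\varphi_i$ correctly as an $\varepsilon$-map.

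First, I would verify that $B_\bullet(X)$ is $d$-stable and bounded below, where $d = \dim(X)$. Boundedness below is immediate since $B_i(X) = 0$ for $i < 0$. For $d$-stability, note that $\cM_i(X) = \cM(X)$ for all $i \geq d$, so the inclusion-induced map $\beta_i \colon B_i(X) \to B_{i+1}(X)$ is the identity on $K_0'(X)$ for $i \geq d$, hence $A_i(X) = \Ker(\beta_i) = 0$ for $i \geq d$.

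Next, I would identify $\varphi_i$ with $\varepsilon_i^{(1)}$ in the general endo-module formalism. By the definition of connective $K$-groups given just above the proposition, $\CH_i(X) = C_i(X)^{(1)}$ and $K_0'(X)_{(i/i-1)} = H(B_\bullet(X))_{(i/i-1)}$, and the proposition's description of $\varphi_i$ matches that of $\varepsilon_i^{(1)}$ recorded in Section~\ref{endomod}. Applying the iterative factorization of $\varepsilon$-maps (established in the general discussion around the colimit isomorphism $\colim_s C_i^{(s)} \iso H_{(i/i-1)}$), but with $B_\bullet$ replaced by its first derivative $B_\bullet^{(1)}$, yields the factorization
\[
\CH_i(X) = C_i(X)^{(1)} \tto C_i(X)^{(2)} \tto \dots \tto C_i(X)^{(s)} \xra{\varepsilon_i^{(s)}} K_0'(X)_{(i/i-1)}
\]
for every $s \geq 1$.

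Finally, Lemma~\ref{props}(1)(c), applied to the $d$-stable endo-module $B_\bullet(X)$, tells us that $\varepsilon_i^{(s)}$ is an isomorphism as soon as $i + s \geq d$, i.e., $s \geq d-i$, which is exactly the bound claimed.

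I do not expect a real obstacle: the work is all packaged into Lemma~\ref{props} and the general endo-module discussion, so the proof is a matter of checking that $B_\bullet(X)$ satisfies the stability hypothesis and that $\varphi_i$ is the $\varepsilon$-map of the derived endo-module. The only small care needed is to distinguish $\varepsilon_i$ (which factors through $C_i^{(1)} = \CH_i$) from $\varphi_i = \varepsilon_i^{(1)}$, so that the chain of surjections in the statement begins at $C_i(X)^{(1)}$ rather than at $C_i(X)$.
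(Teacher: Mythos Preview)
Your proposal is correct and matches the paper's approach: the paper states this proposition without an explicit proof, precisely because it follows immediately from the $d$-stability of $B_\bullet(X)$ (recorded just after its definition), the identification $\varphi_i = \varepsilon_i^{(1)}$ (recorded just before the proposition), the iterated factorization of $\varepsilon_i$ in Section~\ref{endomod}, and Lemma~\ref{props}(1)(c). Your write-up simply makes these implicit steps explicit.
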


\begin{remark}
The groups $B_i(X)$ and $B_i(X)^{(1)}=\CK_i(X)$ (but not $B_i(X)^{(s)}$ with $s>1$), viewed as generalized homology theories, satisfy the localization property (see \cite[Definition 4.4.6]{LM07}). The derivatives $B_i(X)^{(s)}$ (but not $B_i(X)$) satisfy homotopy property (see \cite[Definition 5.1.3]{LM07}) if $s\geq 1$. Thus, the first derivative (the connective $K$-theory) is the only derivative that satisfies both localization and homotopy properties.
\end{remark}

\subsection{Generators for \texorpdfstring{$A_i(X)$}{Ai(X)}}

\begin{dfn}
Let $L$ be a line bundle (locally free coherent $\cO_X$-module of constant rank $1$) over a variety $X$, and $s\in H^0(X,L)$ a section. We denote by $\cZ(s)$ the closed subscheme of $X$ whose ideal is the image of $s^{\vee} \colon L^\vee \to \cO_X$, and by $\cD(s)$ its open complement. The section $s$ is called \emph{regular} if the morphism $s \colon \cO_X \to L$ (or equivalently $s^{\vee} \colon L^\vee \to \cO_X$) is injective. In this case, the immersion $\cZ(s) \to X$ is an effective Cartier divisor.
\end{dfn}

If $s$ is a regular section of a line bundle $L$ over $X$, the exact sequence of $\cO_X$-modules
\[
0 \to L^{\vee} \xra{s^\vee} \cO_X \to \cO_{\cZ(s)} \to 0
\]
shows that
\begin{equation}
\label{eq:OZs}
[\cO_{\cZ(s)}] = [\cO_X] - [L^{\vee}] \in K_0'(X).
\end{equation}

\begin{notation}
\label{not:0_infty}
Let us write $\Pu = \Proj(F[x,y])$, and view $x$ and $y$ as sections of $\cO(1)$. We also view $x/y$ (resp.\ $y/x$) as a regular function on $\cD(y)$ (resp.\ $\cD(x)$). Mapping $u$ to that function induces an isomorphism between $\Au = \Spec(F[u])$ and $\cD(y)$ (resp.\ $\cD(x)$).
\end{notation}

\begin{lemma}
\label{lemm:partial_Pu}
We have $\partial_0(x/y) = [\cO_{\cZ(x)}] - [\cO_{\cZ(y)}]$ in $B_0(\Pu)$.
\end{lemma}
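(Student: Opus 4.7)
The plan is to compute $\partial_0(x/y)$ by identifying the differential with the divisor of the rational function $x/y$ on $\Pu$, under the correspondence sending a closed point $P$ to $[\cO_P]\in B_0(\Pu)$. Since $\Pu$ is integral of dimension $1$, its unique dimension-$1$ point is the generic point, and the differential here is the map $F(\Pu)^\times\to B_0(\Pu)$ coming from the inclusion $\cM_0(\Pu)\subset \cM(\Pu)$. By the standard description of the boundary in Quillen's localization sequence (which the authors are citing via \cite[\S 7]{Quillen73}), for any $f\in F(\Pu)^\times$ we have
\[
\partial_0(f)=\sum_{P}\operatorname{ord}_P(f)\,[\cO_P],
\]
where $P$ ranges over the closed points of $\Pu$ and $\operatorname{ord}_P$ is the associated discrete valuation.

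Next I would compute $\operatorname{div}(x/y)$ directly using Notation \ref{not:0_infty}. Under the isomorphism $\cD(y)\simeq \Spec F[u]$, the function $x/y$ restricts to $u$, so it has a simple zero at $u=0$ (that is, at $\cZ(x)=[0{:}1]$) and no other zeros or poles on $\cD(y)$. Under the isomorphism $\cD(x)\simeq \Spec F[v]$ with $v=y/x$, it restricts to $v^{-1}$, so it has a simple pole at $v=0$ (that is, at $\cZ(y)=[1{:}0]$) and is regular and nonvanishing elsewhere. Since $\cD(x)\cup\cD(y)$ covers every closed point of $\Pu$, this gives $\operatorname{div}(x/y)=[\cZ(x)]-[\cZ(y)]$ and hence the claimed equality in $B_0(\Pu)$.

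The part that is not purely formal is justifying the divisor formula for $\partial_0$ itself. If the authors want a self-contained argument avoiding the general formula, the alternative is to exhibit an explicit zigzag lifting $x/y$ to the Serre quotient: dualizing $x,y\in H^0(\Pu,\cO(1))$ produces injective morphisms $x^{\vee},y^{\vee}\colon \cO(-1)\hookrightarrow \cO$ with cokernels $\cO_{\cZ(x)}$ and $\cO_{\cZ(y)}$ respectively (by the exact sequence used in \eqref{eq:OZs}), both of which lie in $\cM_0(\Pu)$. Consequently, $x^{\vee}$ and $y^{\vee}$ become isomorphisms in $\cM(\Pu)/\cM_0(\Pu)\simeq \cM(\Spec F(\Pu))$, and the zigzag
\[
\cO \xleftarrow{y^{\vee}} \cO(-1) \xrightarrow{x^{\vee}} \cO
\]
represents the element $x/y\in F(\Pu)^\times=K_1(\cM(\Pu)/\cM_0(\Pu))$; the defining property of the connecting homomorphism then immediately returns $[\Coker x^{\vee}]-[\Coker y^{\vee}]=[\cO_{\cZ(x)}]-[\cO_{\cZ(y)}]$.

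The main obstacle is purely a bookkeeping one: keeping the sign conventions straight so that the zigzag above represents $x/y$ rather than $y/x$, and verifying that the connecting map in the authors' chosen formulation of the localization sequence is normalized to give $[\Coker x^{\vee}]-[\Coker y^{\vee}]$ with the stated sign. Once this is pinned down, the identification of the cokernels via \eqref{eq:OZs} is automatic and the lemma follows without any further geometric input.
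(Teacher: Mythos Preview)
Your argument is correct and lands on the same Quillen input as the paper, but the organization differs slightly. The paper's proof does not invoke the general divisor formula $\partial_0(f)=\sum_P\operatorname{ord}_P(f)\,[\cO_P]$ on $\Pu$; instead it observes that the restriction map $B_0(\Pu)\to B_0(\cD(x))\oplus B_0(\cD(y))$ is injective, so it suffices to check the identity on each affine chart, which reduces to the single affine statement $\partial_0(u)=[\cO_{\cZ(u)}]$ in $B_0(\Au)$, for which it cites \cite[\S7, Lemma~5.1]{Quillen73} directly. Your route is marginally more global (you compute the full divisor on $\Pu$), while the paper's is marginally more local (it reduces to one affine identity), but the substantive content is the same Quillen computation in both cases. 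Your alternative zigzag description of $x/y$ in $K_1$ of the Serre quotient is a genuinely self-contained variant that the paper does not pursue; it trades the citation for the sign-bookkeeping you flag, and would be the route to take if one wanted to avoid appealing to the general formula.
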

\begin{proof}
Restricting to the open subschemes $\cD(x), \cD(y)$ induces an injective map $B_0(\Pu) \to B_0(\cD(x)) \oplus B_0(\cD(y)$. Thus we are reduced to proving that $\partial_0(u) = [\cO_{\cZ(u)}] \in B_0(\Au)$ under the identification $\Au = \Spec(F[u])$. This is done, e.g.\ in \cite[\S7, Lemma 5.1]{Quillen73}.
\end{proof}

\begin{proposition}
\label{prop:ker}
Let $X$ be a variety and $i\in \Z$. The subgroup $A_i(X) \subset B_i(X)$ is generated by the elements $f_*([\cO_{\cZ(s_1)}] - [\cO_{\cZ(s_2)}])$, where
\begin{itemize}
\item $f\colon Y \to X$ is a proper morphism,
\item $Y$ is quasi-projective and integral of dimension $i+1$,
\item $s_1,s_2$ are regular sections of a common line bundle over $Y$.
\end{itemize}
\end{proposition}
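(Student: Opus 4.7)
First I would show the two inclusions separately. For the containment of the subgroup $S$ generated by the proposed elements in $A_i(X)$, given $f, L, s_1, s_2$ as in the statement, I apply equation~\eqref{eq:OZs} to observe that $\beta_i[\cO_{\cZ(s_j)}] = [\cO_Y] - [L^\vee]$ in $B_{i+1}(Y) = K'_0(Y)$ for both $j = 1, 2$; the difference $[\cO_{\cZ(s_1)}] - [\cO_{\cZ(s_2)}]$ then lies in $\ker\beta_i = A_i(Y)$, and the proper pushforward $f_*$ (which preserves support dimension and thus commutes with $\beta_i$) sends it into $A_i(X)$.

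For the reverse inclusion, I plan to use~\eqref{formulafor} together with the decomposition of $C_{i+1}(X,1)$ and~\eqref{eq:partial_push} applied to the closed immersion $\overline{\{x\}} \hookrightarrow X$ to reduce to the case where $X$ is integral of dimension $d = i + 1$ and $u \in F(X)^\times$. Invoking Chow's lemma and~\eqref{eq:partial_push} a second time lets me further assume $X$ is quasi-projective, and I may of course exclude the trivial case where $u$ is a constant (giving $\partial_d(u) = 0$).

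Next I construct $Y$: let $U \subset X$ be the open subscheme on which $u$ is a regular unit, so that $u|_U \colon U \to \mathbb{G}_m \subset \Pu$ is a morphism, and take $Y$ to be the closure in $X \times \Pu$ of the graph of $u|_U$. Then $Y$ is integral (closure of an integral subscheme), quasi-projective (closed in the quasi-projective $X \times \Pu$), of dimension $d$, and its projections yield a proper birational morphism $\pi \colon Y \to X$ together with a morphism $g \colon Y \to \Pu$ extending $u|_U$. Crucially, since $Y$ is integral and $g$ is non-constant, hence dominant, and $\Pu$ is a regular $1$-dimensional (Dedekind) scheme, the classical fact that torsion-free modules over Dedekind rings are flat implies that $g$ is automatically flat of relative dimension $d - 1$.

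Setting $L = g^*\cO(1)$, $s_1 = g^*x$, and $s_2 = g^*y$, these sections are nonzero (since $g$ does not factor through $\{0\}$ or $\{\infty\}$), hence regular on the integral scheme $Y$, and flat pullback identifies $g^*\cO_{\cZ(x)}$ with $\cO_{\cZ(s_1)}$ and $g^*\cO_{\cZ(y)}$ with $\cO_{\cZ(s_2)}$. The final computation combines Lemma~\ref{lemm:partial_Pu} with~\eqref{eq:partial_pull} applied to the flat $g$, using $g^*(x/y) = u$ in $F(Y)^\times$:
\[
\partial_d(u) = g^*\partial_1(x/y) = [\cO_{\cZ(s_1)}] - [\cO_{\cZ(s_2)}]
\]
in $B_{d-1}(Y)$. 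Pushing forward by the birational $\pi$ via~\eqref{eq:partial_push} (so $\pi_*u = u$) then yields $\partial_d(u) = \pi_*([\cO_{\cZ(s_1)}] - [\cO_{\cZ(s_2)}])$ in $B_i(X)$, of the required form with $f = \pi$. The main technical point, and what dictates the choice of construction, is ensuring that $g$ is flat so that~\eqref{eq:partial_pull} applies; this flatness comes for free from the graph construction, whereas it would typically fail if one tried to realize $u$ as a morphism directly on $X$ without passing to $Y$.
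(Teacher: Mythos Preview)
Your proof is correct and follows the same strategy as the paper's: both reduce to a rational function on an integral $(i{+}1)$-dimensional variety, arrange a quasi-projective model via Chow's lemma together with a graph closure over $\Pu$, and then pull back Lemma~\ref{lemm:partial_Pu} along the resulting flat map $g$ to $\Pu$. The only differences are cosmetic (you apply Chow's lemma before taking the graph closure, the paper after), and one small point is worth making explicit: for $g$ to be dominant you need $u$ transcendental over $F$, so ``constant'' must include the case where $u$ is algebraic over $F$ but not in $F$ --- there $\partial_d(u)=0$ still holds (e.g.\ pull back from the closed point of $\Pu$ through which $g$ then factors, exactly as the paper does with its $\dim S=0$ case), but it deserves a word.
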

\begin{proof}
Let $S_i(X) \subset B_i(X)$ be the subgroup generated by the elements $f_*([\cO_{\cZ(s_1)}] - [\cO_{\cZ(s_2)}])$ as in the statement. For such $s_1,s_2$, we have $[\cO_{\cZ(s_1)}] = [\cO_{\cZ(s_2)}] \in K_0'(Y) = B_{i+1}(Y)$ by \eqref{eq:OZs}, and thus $S_i(X) \subset A_i(X)$.

It follows from \eqref{formulafor} and \eqref{eq:partial_push} that the subgroup $A_i(X) \subset B_i(X)$ is generated by the push-forwards of elements $\partial_i(a) \in B_i(Z)$, where $a \in F(Z)^{\times}$ with $Z \subset X$ an integral closed subscheme of dimension $i+1$. Let $U$ be a dense open subscheme of $Z$ such that $a \in H^0(U,\cO_U) \subset F(Z)$. Mapping $u$ to $a$ induces a morphism $U \to \Spec(F[u]) = \Au$. Composing with the morphism $\Au \simeq \cD(y) \subset \Pu$ (using Notation \ref{not:0_infty}), we obtain a morphism $U \to \Pu$. We denote by $S$ the closure in $\Pu$ of the image of the latter morphism, endowed with the reduced scheme structure. Consider the graph of the morphism $U \to S$ as a closed subset of $U \times S$, and let $Y'$ be its closure in $Z \times S$, endowed with the reduced scheme structure. By Chow's lemma \cite[(5.6.1)]{ega-2} we may find a proper birational morphism $Y \to Y'$, where $Y$ is quasi-projective and integral. Then we have morphisms $Z \xleftarrow{f} Y \xrightarrow{g} S$. The morphism $f$ is proper and birational, hence $a$ admits a pre-image $b$ under the isomorphism $f_*\colon F(Y)^\times = C_{i+1}(Y,1) \to C_{i+1}(Z,1) = F(Z)^\times$.  The morphism $g$ is dominant, hence flat by \cite[III 9.7]{Har-Alg-77}.

If $\dim S=0$ (i.e.\ $a$ is constant), then $b=g^*c$ for some $c \in F(S)^{\times}$, and the morphism $g$ has relative dimension $i+1$, so that, by \eqref{eq:partial_push} and \eqref{eq:partial_pull}
\[
\partial_i(a) = f_* \circ \partial_i(b) = f_* \circ g^* \circ \partial_{-1}(c) \subset f_* \circ g^*B_{-1}(S)=0.
\]

Otherwise $S=\Pu$, and $g$ has relative dimension $i$. Using Notation \ref{not:0_infty}, we have $b = g^*(x/y)$. By Lemma \ref{lemm:partial_Pu} and \eqref{eq:partial_pull}, we have in $B_i(Y)$
\[
\partial_i(b) =  g^* \circ \partial_0(x/y) = g^*([\cO_{\cZ(x)}] - [\cO_{\cZ(y)}]) = [\cO_{g^{-1}\cZ(x)}] - [\cO_{g^{-1}\cZ(y)}].
\]
The flatness of $g$ implies that the sections $s_1 := g^*x$ and $s_2 := g^*y$ of $g^*\cO(1)$ are regular, and satisfy $\cZ(s_1) = g^{-1}\cZ(x)$ and $ \cZ(s_2) = g^{-1}\cZ(y)$. Using \eqref{eq:partial_push}, we deduce that $\partial_i(a) = f_* \circ \partial_i(b)= f_*([\cO_{\cZ(s_1)}] -[\cO_{\cZ(s_2)}])$ in $B_i(X)$, and we have proved that $A_i(X) \subset S_i(X)$.
\end{proof}

\section{Homological Adams operations}\label{adams}
\subsection{\texorpdfstring{$K$}{K}-theory with supports}

\begin{dfn}
\label{def:K_support}
Let $X$ be a variety and $Y \subset X$ a closed subscheme. We consider the category of chain complexes of locally free coherent $\cO_X$-modules
\[
E_\bullet = \cdots \to E_n \to E_{n-1} \to \cdots
\]
satisfying $E_i=0$ when $i<0$ or $i>\!\!>0$. The full subcategory consisting of those complexes whose homology is supported on $Y$ will be denoted by  $\cC^Y(X)$. We define the group $K_0^Y(X)$ as the free abelian group generated by the elements $[E_\bullet]$, where $E_\bullet$ runs over the isomorphism classes of objects in $\cC^Y(X)$, modulo the following relations:
\begin{itemize}
\item If $0 \to E_\bullet' \to E_\bullet \to E_\bullet'' \to 0$ is an exact sequence of complexes in $\cC^Y(X)$, then $[E_\bullet] = [E'_\bullet] + [E''_\bullet]$ in $K_0^Y(X)$.

\item If $E_\bullet \to E'_\bullet$ is a quasi-isomorphism in $\cC^Y(X)$, then $[E_\bullet] = [E'_\bullet]$ in $K_0^Y(X)$.
\end{itemize}
\end{dfn}

When $P$ is a locally free coherent $\cO_X$-module and $i\in \N$, we will denote the complex
\begin{equation}
\label{eq:[i]}
\cdots \to 0 \to P \to 0 \to \cdots
\end{equation}
concentrated in degree $i$, by $P[i] \in \cC^X(X)$. We will write $1 := \cO_X[0] \in \cC^X(X)$.\\

Let $X$ be a variety and $Y \subset X$ a closed subscheme. There is a bilinear map
\[
K_0(X) \times K_0^Y(X) \to K_0^Y(X); \quad (a ,\beta) \mapsto a \cdot \beta,
\]
such that for any locally free coherent $\cO_X$-modules $P$ and $E_\bullet \in \cC^Y(X)$ we have
\[
[P] \cdot [E_{\bullet}] = [P \otimes_{\cO_X} E_{\bullet}] \in K_0^Y(X).
\]
If $Z \subset X$ is another closed subscheme, there is a bilinear map
\[
K_0^Y(X) \times K_0'(Z) \to K_0'(Y \cap Z); \quad (\alpha,b) \mapsto \alpha \cap b,
\]
such that for any $E_\bullet \in \cC^Y(X)$ and $M \in \cM(Z)$ we have
\[
[E_\bullet] \cap [M] = \sum_{i\in\N} (-1)^i[H^i(E_\bullet \otimes_{\cO_X} M)] \in K_0'(Y \cap Z).
\]
If $f \colon X' \to X$ is a morphism, there is a pullback homomorphism
\[
f^* \colon K_0^Y(X) \to K_0^{f^{-1}Y}(X').
\]

We will need the following basic compatibilities, which may be verified at the level of modules (before applying the functor $K_0'$).

\begin{lemma}
\label{lemm:compat}
Let $X$ be a variety and $Y,Z$ closed subschemes of $X$. Let $\alpha \in K_0^Y(X)$ and $b \in K_0'(Z)$. Denote by $i\colon Z \to X$ the closed immersion.
\begin{enumerate}[label=(\alph*), ref=\ref{lemm:compat}.\alph*]
\item
\label{compat:cap_commutes}
Let $Y'$ be a closed subscheme of $X$ and $\alpha' \in K_0^{Y'}(X)$. Then
\[
\alpha \cap (\alpha' \cap b) = \alpha' \cap (\alpha \cap b) \in K_0'(Y \cap Y'\cap Z).
\]

\item
\label{compat:cdot_cap}
Denote by $f \colon Y\cap Z \to X$ the closed immersion. For any $e \in K_0(X)$,
\[
(e \cdot \alpha) \cap b = \alpha\cap ( (i^*e) \cdot b) = (f^*e) \cdot (\alpha \cap b) \in K_0'(Y \cap Z).
\]

\item
\label{compat:cap_pullback}
Denote by $g \colon Y \to X$ the closed immersion. Then
\[
\alpha \cap b = (g^*\alpha) \cap b \in K_0'(Y \cap Z).
\]

\item
\label{compat:cap_pushforward_2}
If $Y \subset Z$, then
\[
\alpha \cap b = \alpha \cap i_*b \in K_0'(Y).
\]

\item
\label{compat:cap_pushforward_1}
Assume that $Y \subset Z$, and denote by $j\colon Y \to Z$ the closed immersion. Then
\[
j_*(\alpha \cap b) = \tilde{\alpha} \cap b \in K_0'(Z),
\]
where $\tilde{\alpha}$ is the image of $\alpha$ under the ``forgetful'' map $K_0^Y(X) \to K_0^X(X)$.
\end{enumerate}
\end{lemma}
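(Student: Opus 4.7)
The plan is to verify each part by lifting it to the level of complexes of coherent $\cO_X$-modules, as suggested in the statement, so that all the identities reduce to standard manipulations of tensor products, pullbacks, and pushforwards, combined with the additivity of the Euler characteristic $\chi(F_\bullet):=\sum_i(-1)^i[H^i(F_\bullet)]$ taking a bounded complex of coherent sheaves to an element of $K'_0$. Throughout, I represent $\alpha=[E_\bullet]$ with $E_\bullet\in\cC^Y(X)$, $\alpha'=[E'_\bullet]$ with $E'_\bullet\in\cC^{Y'}(X)$, and $b=[M]$ with $M$ a coherent $\cO_Z$-module.

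For part (a), the core idea is to introduce the triple tensor product double complex $F_{\bullet,\bullet}:=E_\bullet\otimes_{\cO_X}E'_\bullet\otimes_{\cO_X}M$, whose total complex has bounded cohomology supported on $Y\cap Y'\cap Z$. The two iterated cap products $\alpha\cap(\alpha'\cap b)$ and $\alpha'\cap(\alpha\cap b)$ are then both seen to compute the Euler characteristic $\chi(\mathrm{Tot}\,F_{\bullet,\bullet})$ via the two spectral sequences associated with the double complex (one filtering by $E_\bullet$, the other by $E'_\bullet$); both converge since each $E_n$ and $E'_n$ is locally free. The key calculation is that the total-degree Euler characteristic of a spectral sequence page is invariant under each differential $d_r$, because the image of $d_r$ appears once with sign $+$ (as a quotient of the source bidegree) and once with sign $-$ (as a subobject of the target bidegree, whose total degree differs by $1$), and these two contributions cancel. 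Hence both iterated cap products equal $\chi(\mathrm{Tot}\,F_{\bullet,\bullet})$.

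Parts (b)--(e) reduce to direct module-level identities. In (b), the first equality follows from $(P\otimes_{\cO_X}E_\bullet)\otimes_{\cO_X}M\cong E_\bullet\otimes_{\cO_X}(i^*P\otimes_{\cO_Z}M)$, which combines associativity with the standard identification $P\otimes_{\cO_X}M=i^*P\otimes_{\cO_Z}M$ valid for $\cO_Z$-modules $M$; the second equality uses that $f^*P$ is locally free on $Y\cap Z$, so that tensoring with it is exact and commutes with $H^i$. Part (c) reduces to $g^*E_\bullet\otimes_{\cO_Y}N=E_\bullet\otimes_{\cO_X}N$, once one unwinds $g^*E_\bullet=E_\bullet\otimes_{\cO_X}\cO_Y$ and applies associativity. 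Parts (d) and (e) amount to bookkeeping about the same sheaves viewed in different categories: in (d), the tensor product $E_\bullet\otimes_{\cO_X}M$ depends only on the $\cO_X$-module structure of $M$, which is unchanged by $i_*$; in (e), the sheaves $H^i(E_\bullet\otimes_{\cO_X}M)$ are supported on $Y\cap Z=Y$, so they lift canonically from $K'_0(Z)$ to $K'_0(Y)$, and $j_*$ simply restores the $\cO_Z$-module structure obtained by computing $\tilde\alpha\cap b$ directly.

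The main obstacle is the spectral-sequence argument underlying (a); once that is granted, the remaining parts are formal, requiring only careful attention to the base rings of tensor products and to where the resulting sheaves are supported. One should also check throughout that the various bilinear maps defined via $\chi$ descend to the quasi-isomorphism relations in the definition of $K_0^Y(X)$, which again follows from the exactness of tensoring with a bounded complex of locally free modules.
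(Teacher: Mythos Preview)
Your proposal is correct and follows exactly the approach the paper indicates: the paper does not give a proof at all, but simply states that these compatibilities ``may be verified at the level of modules (before applying the functor $K_0'$),'' which is precisely what you carry out in detail. Your spectral-sequence justification for (a) and the module-level identities for (b)--(e) are the natural way to flesh out that remark.
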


\begin{lemma}[{\cite[Lemma 1.9]{GS-87}}]
\label{lemm:regular}
Let $X$ be a regular variety and $Y \subset X$ a closed subscheme. Then the following map is an isomorphism:
\[
K_0^Y(X) \to K_0'(Y) \quad ; \quad \alpha \mapsto \alpha \cap [\cO_X].
\]
\end{lemma}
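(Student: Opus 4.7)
The strategy is to construct an explicit inverse $\psi \colon K_0'(Y) \to K_0^Y(X)$ as follows. Given $M \in \cM(Y)$, regard $i_*M$ as a coherent $\cO_X$-module; by the regularity of $X$ it admits a bounded locally free resolution $E_\bullet \to i_*M$, which lies in $\cC^Y(X)$, and we set $\psi([M]) := [E_\bullet]$. Two such resolutions are connected by a quasi-isomorphism obtained by lifting the identity on $i_*M$ term by term, so $\psi([M])$ is independent of the chosen resolution by relation (ii) of Definition~\ref{def:K_support}. The horseshoe lemma applied to a short exact sequence in $\cM(Y)$ produces a short exact sequence of resolutions in $\cC^Y(X)$, so $\psi$ is additive and descends to $K_0'(Y)$. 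The composition $(\cap [\cO_X]) \circ \psi$ is the identity on $K_0'(Y)$ by direct computation: for a resolution $E_\bullet \to i_*M$, the defining formula yields $\sum_i (-1)^i [H^i(E_\bullet)] = [i_*M] = [M]$.

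It remains to prove that $\psi \circ (\cap [\cO_X])$ is the identity on $K_0^Y(X)$, which amounts to the assertion
\[
[E_\bullet] = \sum_i (-1)^i [F^i_\bullet] \in K_0^Y(X)
\]
for every $E_\bullet \in \cC^Y(X)$, where $F^i_\bullet$ is a locally free resolution of $H^i(E_\bullet)$ over $X$ (a coherent $\cO_X$-module supported on $Y$). I would argue by induction on the number of indices with $H^i(E_\bullet) \neq 0$. The base case (acyclic $E_\bullet$) is immediate from relation (ii), since such an $E_\bullet$ is quasi-isomorphic to the zero complex. For the inductive step, let $m$ be the largest index with $H^m(E_\bullet) \neq 0$, and construct a chain map $\phi \colon G_\bullet \to E_\bullet$, where $G_\bullet$ is a locally free resolution of $H^m(E_\bullet)$ shifted so that its zeroth term sits in degree $m$, arranged to induce the identity on $H^m$. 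The mapping cone $C := \mathrm{Cone}(\phi)$ is termwise locally free and fits into a short exact sequence of complexes $0 \to E_\bullet \to C \to G_\bullet[1] \to 0$ in $\cC^Y(X)$. The long exact sequence of cohomology combined with the maximality of $m$ shows that $H^i(C) = H^i(E_\bullet)$ for $i < m$ and $H^i(C) = 0$ for $i \geq m$, so $C$ has strictly fewer nonzero cohomology groups than $E_\bullet$. Combining the identity $[G_\bullet[1]] = -[G_\bullet] = (-1)^{m+1}[F^m_\bullet]$ (which follows from the acyclicity of $\mathrm{Cone}(\id_{G_\bullet})$, giving $[E] + [E[1]] = 0$ for any $E \in \cC^Y(X)$) with the inductive hypothesis for $C$ yields the desired equality.

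The main obstacle is the construction of the chain map $\phi \colon G_\bullet \to E_\bullet$: the surjection $F^m_0 \twoheadrightarrow H^m(E_\bullet)$ must be lifted to a map $F^m_0 \to E_m$ landing in $\ker(d_m)$, and compatible higher-degree maps must be produced, even though locally free coherent sheaves are not globally projective. This is circumvented by building $G_\bullet$ and $\phi$ simultaneously: after choosing $F^m_0$ to be a locally free sheaf surjecting directly onto $\ker(d_m) \subset E_m$ (which determines $\phi_m$ automatically and ensures $F^m_0 \twoheadrightarrow H^m(E_\bullet)$), one inductively takes $F^m_{k}$ to be a locally free sheaf surjecting onto the fiber product $F^m_{k-1} \times_{E_{m+k-1}} E_{m+k}$, thereby simultaneously producing the next term of the resolution and the next component of the chain map. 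The resolution property of the quasi-projective variety $X$ guarantees the existence of these locally free surjections.
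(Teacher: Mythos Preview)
The paper does not prove this lemma; it is quoted without proof from \cite[Lemma~1.9]{GS-87}. Your direct argument is along the right lines, but it contains two genuine gaps.

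The first is in the well-definedness of $\psi$. You assert that two locally free resolutions of $i_*M$ are linked by a quasi-isomorphism ``obtained by lifting the identity on $i_*M$ term by term''. Such a lifting requires the terms to be projective, and locally free coherent sheaves on a non-affine scheme are generally not projective (for $E$ locally free one has $\operatorname{Ext}^1(E,-)\cong H^1(X,E^\vee\otimes -)$, which need not vanish). You recognize this obstruction in the final paragraph, but the technique you describe there only manufactures a chain map from a \emph{newly built} complex to a \emph{given} one; it does not compare two given resolutions. The standard repair is a symmetric variant: given resolutions $E_\bullet,E'_\bullet$ of $M$, build a third resolution $G_\bullet$ together with quasi-isomorphisms to both, starting from a locally free surjection onto $E_0\times_M E'_0$ and continuing with analogous double fiber products.

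The second gap is in the fiber-product recipe for $\phi$ itself: as written it does not produce a chain complex. With $F^m_k$ surjecting onto $F^m_{k-1}\times_{E_{m+k-1}}E_{m+k}$, one checks that for $k\geq 2$ the composite $d^G_{m+k-1}\circ d^G_{m+k}$ lands in $\ker(\phi_{m+k-2})$ rather than in $0$, and this kernel is typically nonzero (already the initial surjection $F^m_0\twoheadrightarrow\ker(d_m)$ has no reason to be injective). The correct target for the surjection defining $F^m_k$ is $\ker(d^G_{m+k-1})\times_{E_{m+k-1}}E_{m+k}$: restricting to the kernel before forming the fiber product forces $d^G\circ d^G=0$, and the hypothesis $H^{>m}(E_\bullet)=0$ then guarantees exactness of $G_\bullet$ in degrees above $m$. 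Finally, regularity of $X$ must be invoked once more to terminate the construction and obtain a \emph{bounded} $G_\bullet$.
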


\begin{dfn}
Let $L$ be a line bundle over $X$, and $s\in H^0(X,L)$ a section. We will denote by $K(s)$ the complex of locally free coherent $\cO_X$-modules
\[
\cdots \to 0 \to L^\vee \xrightarrow{s^{\vee}} \cO_X \to 0 \to \cdots
\]
concentrated in degrees $1,0$.
\end{dfn}

The homology of $K(s)$ is supported on $\cZ(s)$, so that we have a class $[K(s)] \in K_0^{\cZ(s)}(X)$. If the section $s$ is regular, then
\begin{equation}
\label{eq:K(s)_Z(s)}
[K(s)] \cap [\cO_X] = [\cO_{\cZ(s)}] \in K_0'(\cZ(s)).
\end{equation}

\begin{lemma}
\label{lemm:forget_supports}
Let $L$ be a line bundle over a variety $X$. Then the image of $[K(s)]$ in $K_0^X(X)$ does not depend on the choice of the section $s \in H^0(X,L)$.
\end{lemma}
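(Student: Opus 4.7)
The plan is to show that $[K(s)]$ already decomposes in $K_0^X(X)$ as a sum of two classes neither of which involves the section $s$, by applying the short exact sequence relation of Definition~\ref{def:K_support} to the ``stupid filtration'' of the two-term complex $K(s)$.

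Concretely, I would write down the sequence of complexes
\[
0 \longrightarrow \cO_X[0] \xrightarrow{\ \iota\ } K(s) \xrightarrow{\ \pi\ } L^\vee[1] \longrightarrow 0,
\]
where $\iota$ is the identity $\cO_X \to \cO_X$ in degree $0$ and the zero map in degree $1$, and $\pi$ is the zero map in degree $0$ and the identity $L^\vee \to L^\vee$ in degree $1$. A direct check shows these are chain maps (both commutation squares reduce to $0=0$), and the sequence is degree-wise split exact. Since $Y=X$ here, the support condition on the homology is vacuous, so all three complexes automatically lie in $\cC^X(X)$.

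Applying the short-exact-sequence relation defining $K_0^X(X)$, this yields
\[
[K(s)] \;=\; [\cO_X[0]] + [L^\vee[1]] \qquad \text{in } K_0^X(X),
\]
and the right-hand side plainly does not involve $s$, proving the lemma.

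There is no real obstacle: the only thing to verify is that the two stupid truncations assemble into a genuine short exact sequence of chain complexes of locally free coherent sheaves, which is immediate because $K(s)$ has only two nonzero terms and the relevant commutation constraints are trivial. The apparent subtlety — that quasi-isomorphism is also a relation — is not needed at all.
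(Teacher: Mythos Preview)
Your proof is correct and is essentially identical to the paper's argument: the paper displays the same short exact sequence $0 \to \cO_X[0] \to K(s) \to L^\vee[1] \to 0$ (drawn as a two-row diagram) and concludes that the image of $[K(s)]$ in $K_0^X(X)$ equals $1 + [L^\vee[1]]$, which is independent of $s$.
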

\begin{proof}
The commutative diagram with exact rows
\[ \xymatrix{
0 \ar[r] &0\ar[r] \ar[d] & L^{\vee} \ar[d]^{s^\vee} \ar[r]^{\id} & L^{\vee} \ar[r] \ar[d] & 0 \\
0 \ar[r] & \cO_X \ar[r]^{\id} & \cO_X \ar[r] & 0 \ar[r] & 0
}\]
shows that the image of $[K(s)]$ in $K_0^X(X)$ is $1 + [L^\vee[1]]$ (see \eqref{eq:[i]}). This element is visibly independent of $s$.
\end{proof}

\subsection{Bott's class}
From now on we fix a nonzero integer $k$.

\begin{lemma}
\label{lemm:nilpotent}
Let $L$ be a line bundle over a quasi-projective variety $X$. Then $1-[L] \in K_0(X)$ is nilpotent.
\end{lemma}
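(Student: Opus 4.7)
\medskip

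\noindent\textbf{Plan.} The strategy is to reduce to the model case of $\cO(1)$ on a projective space, then combine the resulting building blocks using only formal properties of the commutative ring $K_0(X)$. On $\Pp^n$, the Koszul complex attached to the sections $x_0,\dots,x_n$ of $\cO(1)$ is exact (empty common zero locus), so it represents the zero class in $K_0(\Pp^n)$, yielding
\[
(1-[\cO(-1)])^{n+1}=\sum_{i=0}^{n+1}(-1)^{i}\binom{n+1}{i}[\cO(-i)]=0;
\]
multiplying by the unit $[\cO(1)]^{n+1}$ also gives $(1-[\cO(1)])^{n+1}=0$. (Alternatively, one may invoke the classical presentation $K_0(\Pp^n)\simeq \Z[t]/(1-t)^{n+1}$.)

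If now $L$ on $X$ is globally generated by $N$ sections, these induce a morphism $f\colon X\to \Pp^{N-1}$ with $L\simeq f^*\cO(1)$, and since $f^*$ is a ring homomorphism one concludes $(1-[L])^{N}=f^*\bigl((1-[\cO(1)])^N\bigr)=0$. In general, quasi-projectivity of $X$ provides an ample line bundle $A$, and for $m\gg 0$ both $A^{\otimes m}$ and $L\otimes A^{\otimes m}$ are globally generated. Setting $L_1=L\otimes A^{\otimes m}$ and $L_2=A^{\otimes m}$, we have $L\simeq L_1\otimes L_2^\vee$, with $1-[L_1]$ and $1-[L_2]$ nilpotent by the previous step. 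Since $[L_2]$ is a unit in $K_0(X)$ with inverse $[L_2^\vee]$, the identity
\[
1-[L_2^\vee]=-[L_2^\vee]\,(1-[L_2])
\]
shows that $1-[L_2^\vee]$ is nilpotent as well. Writing $a=1-[L_1]$ and $b=1-[L_2^\vee]$, one computes
\[
1-[L]=1-(1-a)(1-b)=a+b-ab,
\]
which is nilpotent because the nilpotent elements of a commutative ring form an ideal.

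The main obstacle is the first step, namely exhibiting the nilpotency relation in $K_0(\Pp^n)$; every subsequent step is purely formal, relying only on functoriality of $K_0$ as a commutative ring and on the standard fact that on a quasi-projective variety every line bundle can be written as a difference of two globally generated ones after twisting by a sufficiently large power of an ample line bundle.
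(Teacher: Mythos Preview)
Your proof is correct and follows essentially the same approach as the paper's: reduce to $\Pp^n$ via the morphism associated to a globally generated line bundle, then handle an arbitrary $L$ by writing it as a tensor product of a globally generated line bundle with the dual of another. The only notable difference is in the $\Pp^n$ step, where you invoke the exact Koszul complex on $x_0,\dots,x_n$ to obtain $(1-[\cO(-1)])^{n+1}=0$ directly, whereas the paper argues by induction on $n$ via the projection formula along a hyperplane section; both yield the same relation.
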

\begin{proof}
We may write $L=A \otimes B^{\vee}$ where $A,B$ are line bundles over $X$ such that $A^\vee, B^\vee$ are generated by their global sections. If $1-[A]$ and $1-[B]$ are nilpotent, then so is
\[
1-[L] = (1-[A]) - [B^\vee](1-[B]) + [B^\vee] (1-[A])(1-[B]).
\]
Thus we are reduced to assuming that $L^\vee$ is generated by its global sections. Pulling back along the associated morphism $X \to \Pp^n$, we reduce to $X=\Pp^n$ and $L=\cO_{\Pp^n}(-1)$. We prove by induction on $n$ that $(1-[L])^{n+1}=0 \in K_0(X)$. There is a regular section $s$ of $L^\vee$ such that $\cZ(s) = \Pp^{n-1}$ and $L|_{\cZ(s)} = \cO_{\Pp^{n-1}}(-1)$. Let $i\colon \cZ(s) \to X$ be the immersion. By \eqref{eq:OZs} and the projection formula, we have in $K_0'(X)$
\[
(1-[L])^{n+1} \cdot [\cO_X] = (1-[L])^n \cdot i_*[\cO_{\cZ(s)}] = i_*((1-[L|_{\cZ(s)}])^n \cdot [\cO_{\cZ(s)}]).
\]
That element vanishes by induction. Since the natural homomorphism $K_0(X) \to K_0'(X)$ is an isomorphism \cite[\S7.1]{Quillen73}, the claim follows.
\end{proof}

\begin{dfn}
Consider the power series
\[
\tau^k(c) = \frac{1 - (1-c)^k}{c} \in \Z[[c]].
\]
By Lemma \ref{lemm:nilpotent} and the splitting principle, there is a unique way to assign to each vector bundle $E$ over a variety $X$ an element $\theta^k(E) \in K_0(X)$ so that:
\begin{itemize}
\item If $L$ is a line bundle, then $\theta^k(L) = \tau^k(1-[L])$.

\item If $0 \to E' \to E \to E'' \to 0$ is an exact sequence of vector bundles, then $\theta^k(E) = \theta^k(E') \theta^k(E'')$.

\item If $f\colon Y \to X$ is a morphism and $E$ a vector bundle over $X$, then $f^*\theta^k(E) = \theta^k(f^*E)$.
\end{itemize}
\end{dfn}

The power series $\tau^k(c) - k$ is divisible by $c$ in $\Z[[c]]$, and thus $\tau^k(c)$ admits a multiplicative inverse in $\Z[1/k][[c]]$. We deduce, using Lemma \ref{lemm:nilpotent} and the splitting principle, that $\theta^k(E)$ is invertible in $K_0(X)[1/k]$ for any vector bundle $E$ over a variety $X$. Thus for every variety $X$, the association $E \mapsto \theta^k(E)$ extends uniquely to a map
\[
\theta^k\colon K_0(X) \to K_0(X)[1/k]
\]
satisfying $\theta^k(a - b) = \theta^k(a) \theta^k(b)^{-1}$ for any $a,b\in K_0(X)$.\\

For any variety $X$, we have $\theta^k(1) = \tau^k(0)=k$, and therefore
\begin{equation}
\label{eq:theta_trivial}
\theta^k(n) = k^n \text{ for any } n \in \Z \subset K_0(X).
\end{equation}

\subsection{Adams operations}
The classical Adams operation $\psi^k \colon K_0(-) \to K_0(-)$ is defined using the splitting principle by the following conditions:
\begin{itemize}
\item If $L$ is a line bundle, then $\psi^k [L] = [L^{\otimes k}]$.

\item For any $a,b \in K_0(X)$, we have $\psi^k(a-b) = \psi^k(a) - \psi^k(b)$.

\item If $f\colon Y \to X$ is a morphism, then $\psi^k \circ f^* = f^* \circ \psi^k$.
\end{itemize}

This construction may be refined to obtain an operation on the $K$-theory with supports:
\begin{dfn}
Let $X$ be a regular variety and $Y\subset X$ a closed subscheme. Then the group $K_0^Y(X)$ defined in \eqref{def:K_support} coincides with the one considered in \cite{Soule85}, as they are both canonically isomorphic to $K_0'(Y)$. Thus the construction of \cite{Soule85} yields an Adams operation $\psi^k\colon K_0^Y(X) \to K_0^Y(X)$.
\end{dfn}

The following properties follow from the construction given in \cite{Soule85}.

\begin{lemma}
\label{lemm:psi_support}
Let $X$ be a regular variety and $Y \subset X$ a closed subscheme.
\begin{enumerate}[label=(\alph*), ref=\ref{lemm:psi_support}.\alph*]
\item
\label{lemm:psi_support:pullback}
If $f\colon X' \to X$ is a morphism and $X'$ is regular, then
\[
f^* \circ \psi^k = \psi^k \circ f^* \colon K_0^Y(X) \to K_0^{f^{-1}Y}(X').
\]

\item
\label{lemm:psi_support:compo}
If $k' \in \Z-\{0\}$, then $\psi^k \circ \psi^{k'} = \psi^{kk'}$.

\item
\label{lemm:psi_support:cdot}
For any $a \in K_0(X)$ and $\beta \in K_0^Y(X)$, we have $\psi^k(a \cdot \beta) = (\psi^ka) \cdot (\psi^k\beta)$.

\item
\label{lemm:psi_support:1}
We have $\psi^k1=1$ in $K_0^X(X)$ (see \eqref{eq:[i]}).
\end{enumerate}
\end{lemma}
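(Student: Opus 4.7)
The plan is to deduce all four properties directly from the explicit construction of $\psi^k$ in Soulé's paper. Soulé works at the level of bounded complexes of locally free coherent $\cO_X$-modules with homology supported on $Y$ — exactly the category $\cC^Y(X)$ of Definition \ref{def:K_support} — and defines $\psi^k$ via exterior power operations together with a universal integer polynomial (the Newton formulas). The key structural fact is that, for $X$ regular, the $K_0$ of this category coincides with $K_0^Y(X)$ (this identification underlies Lemma \ref{lemm:regular}), so Soulé's operation on complexes descends to the group $K_0^Y(X)$ defined in the excerpt. On this category $\psi^k$ is manifestly natural in pullbacks along flat maps, additive on short exact sequences of complexes, and multiplicative with respect to tensor product of complexes.

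For part (a), the main point is that even when $f\colon X' \to X$ is not flat, termwise pullback sends a complex in $\cC^Y(X)$ to a complex of locally free $\cO_{X'}$-modules with homology supported on $f^{-1}Y$, because locally free sheaves are flat so exactness of $E_\bullet|_{X\setminus Y}$ is preserved. Exterior powers, tensor products, and sums all commute with this termwise pullback, so $\psi^k$, which is assembled from these operations, commutes with $f^*$. Regularity of $X'$ is needed only to identify the target group with $K_0^{f^{-1}Y}(X')$ so that Soulé's construction applies on that side as well.

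For parts (b) and (c), the plan is to invoke the splitting principle in Soulé's framework: after a suitable base change to a flag bundle, every complex in $\cC^Y(X)$ is quasi-isomorphic to one filtered by subcomplexes whose associated graded pieces are line bundles concentrated in a single degree. The identity $\psi^k \circ \psi^{k'} = \psi^{kk'}$ of (b) then reduces to the tautological equality $[L^{\otimes kk'}] = \psi^k([L^{\otimes k'}])$ on line bundles. For (c), the same reduction identifies both $\psi^k(a \cdot \beta)$ and $(\psi^k a) \cdot (\psi^k \beta)$ with classes of tensor products of line bundles concentrated in a single degree, where equality is immediate. Alternatively one can argue abstractly from the $\lambda$-ring formula $\psi^k(xy) = \psi^k(x)\psi^k(y)$, valid in the graded $\lambda$-ring that Soulé associates to $\cC^Y(X)$, of which the $K_0(X)$-module structure is a restriction. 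Part (d) is then immediate: $1 = [\cO_X[0]]$ is a trivial line bundle concentrated in degree zero, and $\psi^k[\cO_X] = [\cO_X^{\otimes k}] = [\cO_X]$.

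The only mildly subtle point in the plan is part (a), because the pullback homomorphism $f^*$ defined on $K_0^Y(X)$ in the excerpt is taken for arbitrary morphisms between varieties, not just flat ones, so one must check that Soulé's construction — a priori stated with flat pullbacks in mind — is compatible with this non-derived definition. As explained above, the resolution of this is that all complexes involved already consist of termwise flat sheaves, so naive pullback coincides with the derived one and no further argument is required.
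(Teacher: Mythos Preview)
Your proposal is correct and aligns with the paper's approach: the paper gives no proof beyond the sentence ``The following properties follow from the construction given in \cite{Soule85},'' and your elaboration spells out precisely how each item is read off from Soul\'e's exterior-power construction on $\cC^Y(X)$. The only remark is that your justification for the regularity hypothesis on $X'$ in (a) could be phrased more accurately: the group $K_0^{f^{-1}Y}(X')$ is defined for arbitrary $X'$, but the paper's definition of $\psi^k$ on it goes through the identification with Soul\'e's group (via $K_0'(f^{-1}Y)$), which requires $X'$ regular --- this is what you say, just stated slightly imprecisely.
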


\begin{dfn}
\label{def:homological_Adams}
Any quasi-projective variety $X$ may be embedded as a closed subscheme of a smooth quasi-projective variety $W$. By Lemma \ref{lemm:regular}, there is a unique homomorphism
\[
\psi_k \colon K_0'(X)[1/k] \to K_0'(X)[1/k],
\]
called the $k$th \emph{Adams operation of homological type}, such that for any $\alpha \in K_0^X(W)$
\[
\psi_k(\alpha \cap [\cO_W]) = (\psi^k\alpha) \cap (\theta^k(-\cTan_W) \cdot [\cO_W]).
\]
It follows from the Adams Riemann--Roch theorem without denominators that this operation is independent of the choice of $W$, and that it commutes with proper push-forward homomorphisms (see \cite[Th\'eor\`eme 7]{Soule85}).
\end{dfn}

We now explain how to remove the assumption of quasi-projectivity. An \emph{envelope} is a proper morphism $Y \to X$ such that for each integral closed subscheme $Z \subset X$, there is an integral closed subscheme $W\subset Y$ such that the induced morphism $W \to Z$ is birational. Any base change of an envelope is an envelope, and the composition of two envelopes is an envelope \cite[Lemma 18.3 (2) (3)]{Fulton98}.

\begin{lemma}
\label{lemm:K0_cosheaf}
Let $f\colon Y \to X$ be an envelope. Denote by $p_1,p_2 \colon Y \times_X Y \to Y$ the two projections. Then the following sequence is exact
\[
K_0'(Y \times_X Y) \xrightarrow{(p_1)_* - (p_2)_*} K_0'(Y) \xrightarrow{f_*} K_0'(X) \to 0.
\]
\end{lemma}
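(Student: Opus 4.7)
The composition is zero since $f \circ p_1 = f \circ p_2$ and proper pushforward is functorial. It remains to show that $f_*$ is surjective and that $\Ker f_* \subset \Im((p_1)_* - (p_2)_*)$.

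For surjectivity, I would argue by Noetherian induction on the dimension of the support. Since $K_0'(X)$ is generated by classes $[\cO_Z]$ with $Z \subset X$ integral closed, it suffices to produce a preimage of each such class. The envelope property provides an integral $W \subset Y$ with $W \to Z$ birational; then $f_*[\cO_W] - [\cO_Z]$ is supported on a closed subscheme of $Z$ of strictly smaller dimension (arising from $R^if_*\cO_W$ for $i \geq 1$ and from the failure locus of $f_*\cO_W \to \cO_Z$), and the induction hypothesis produces a preimage for that difference, hence for $[\cO_Z]$.

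The crucial special case for the middle exactness is when $f$ admits a global section $\sigma \colon X \to Y$. Then the morphism $\tau \colon Y \to Y \times_X Y$, $y \mapsto (\sigma(f(y)), y)$, is a closed immersion and a section of $p_2$; for $\alpha \in \Ker f_*$ the class $\tau_*\alpha$ satisfies $(p_2)_*\tau_*\alpha = \alpha$ and $(p_1)_*\tau_*\alpha = \sigma_*f_*\alpha = 0$, proving $\alpha \in \Im((p_2)_* - (p_1)_*)$ directly. I would reduce the general case to this one by Noetherian induction on $\dim X$ (iterating across top-dimensional irreducible components of $X$ as needed). Given such a component $X_0 \subset X$, the envelope hypothesis yields an integral $W \subset Y$ mapping birationally onto $X_0$; consequently, there is a dense open $U \subset X_0$ disjoint from the other components of $X$ (and thus open in $X$) over which $f$ acquires a section. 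The split case applied to $\alpha|_{f^{-1}(U)}$ produces $\tilde\beta \in K_0'(f^{-1}(U) \times_U f^{-1}(U))$; lifting $\tilde\beta$ via Quillen's localization surjection to $\beta \in K_0'(Y \times_X Y)$, the class $\alpha - ((p_2)_* - (p_1)_*)\beta$ restricts to zero over $U$ and thus is the pushforward of some $\alpha' \in K_0'(f^{-1}(X \setminus U))$.

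The main obstacle is arranging $(f')_*\alpha' = 0$ so that the induction hypothesis can be applied to the restricted envelope $f' \colon f^{-1}(X \setminus U) \to X \setminus U$. A priori one only controls $(f')_*\alpha'$ modulo the image of the Quillen boundary $K_1'(U) \to K_0'(X \setminus U)$, since the closed immersion $X \setminus U \hookrightarrow X$ need not induce an injection on $K_0'$. I would remedy this by correcting $\alpha'$ by the Quillen boundary of a suitable lift into $K_1'(f^{-1}(U))$: such a lift exists because the section $\sigma|_U$ makes the pushforward $K_1'(f^{-1}(U)) \to K_1'(U)$ split surjective, and the correction vanishes upon pushforward to $K_0'(Y)$ since consecutive maps in Quillen's localization sequence compose to zero, so the previously arranged identity in $K_0'(Y)$ is preserved. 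With $\alpha'$ now in $\Ker (f')_*$, the induction hypothesis provides $\alpha' \in \Im((p_1)_* - (p_2)_*)$ coming from classes in $K_0'(f^{-1}(X\setminus U) \times_{X \setminus U} f^{-1}(X \setminus U))$; pushing these forward along the closed immersion into $Y \times_X Y$ and combining with $\beta$ yields the required expression for $\alpha$.
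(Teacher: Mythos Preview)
Your argument is correct and follows essentially the same route as the paper's: noetherian induction, find an open $U$ over which $f$ has a section, handle the split case directly via the graph of $\sigma\circ f$, lift and subtract to reduce to the closed complement, then correct using a $K_1'$-boundary lifted through the split surjection $(f|_U)_*$ (this is exactly what the paper's terse ``chasing the above diagram'' unpacks to) so that the induction hypothesis applies. One small technical point: the open $U$ you describe is only guaranteed to be an open \emph{subscheme} of $X$ carrying a scheme-theoretic section once $X$ is reduced, so you should first pass to the reduced structure via the nilimmersion isomorphism on $K_0'$ (as the paper does); it is also cleaner to run noetherian induction on the closed subschemes of $X$ rather than on $\dim X$ with an ad hoc iteration over top-dimensional components.
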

\begin{proof}
The sequence is clearly a complex. We proceed by noetherian induction on $X$. Since push-forward homomorphisms along nilimmersions are bijective \cite[\S7, Proposition 3.1]{Quillen73}, we may assume that $X$ is reduced. Assuming that $X \neq \varnothing$, we may find a closed subscheme $X' \subsetneq X$ whose open complement $U$ is such that $f|_U\colon V:=f^{-1}U \to U$ admits a section $s \colon U \to V$ (letting $X_1,\dots,X_n$ be the irreducible components of $X$, we find $Y_1\subset Y$ birationally dominating $X_1$; then $Y_1 \to X_1$ restricts to an isomorphism over a nonempty open subscheme $U_1$ of $X_1$, and we set $U=U_1 \cap (X - (X_2 \cup \cdots \cup X_n))$). Let $Y'=f^{-1}(X')$, and consider the commutative diagram with exact rows \cite[\S7.3]{Quillen73}
\[ \xymatrix{
K_1'(V)\ar[r] \ar[d]^{(f|_U)_*} & K_0'(Y') \ar[r] \ar[d]^{(f|_{X'})_*} & K_0'(Y) \ar[d]^{f_*} \ar[r]& K_0'(V) \ar[d]^{(f|_U)_*} \ar[r] &0\\
K_1'(U)\ar[r] & K_0'(X') \ar[r] & K_0'(X) \ar[r] & K_0'(U)\ar[r] &0
}\]
Each homomorphism $(f|_U)_*$ is surjective, since it admits a section $s_*$. The homomorphism $(f|_{X'})_*$ is surjective by induction, and a diagram chase shows that $f_*$ is surjective.

Let now $a \in K_0'(Y)$ be such that $f_*a=0$ in $K_0'(X)$. Let $b_U \in K_0'(V \times_U V)$ be the image of $a|_V \in K_0'(V)$ under the push-forward homomorphism along $(\id_V,s \circ f|_U) \colon V \to V \times_U V$, and let $b \in K_0'(Y \times_X Y)$ be a pre-image of $b_U$. Then $(p_1)_*(b)|_V = a|_V$ and $(p_2)_*(b)|_V =0$. Thus $a - ((p_1)_* - (p_2)_*)(b) \in K_0'(Y)$ is the image of an element of $c \in K_0'(Y')$. Chasing the above diagram, we see that $c$ may be modified to satisfy additionally $(f|_{X'})_*(c) =0$. By induction $c$ is the image of an element of $K_0'(Y' \times_{X'} Y')$, whose push-forward $d \in K_0'(Y \times_X Y)$ satisfies $a = ((p_1)_* - (p_2)_*)(b+d)$. This concludes the proof.
\end{proof}

Since any variety $X$ admits an envelope $Y \to X$ where $Y$ is quasi-projective (see \cite[Lemma 18.3 (3)]{Fulton98}), combining Lemma \ref{lemm:K0_cosheaf} with \cite[Proposition~5.2]{2nd} yields:

\begin{proposition}
\label{prop:non_quasiproj}
There is a unique way to define an operation $\psi_k \colon K_0'(X)[1/k] \to K_0'(X)[1/k]$ for each variety $X$, compatibly with proper push-forward homomorphisms and agreeing with Definition \ref{def:homological_Adams} when $X$ is quasi-projective.
\end{proposition}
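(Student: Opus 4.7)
The plan is to realize $K_0'(X)[1/k]$ as a coequalizer via an envelope with quasi-projective source, descend the Adams operation along this presentation, and then invoke the general categorical descent packaged in \cite[Proposition 5.2]{2nd} to promote the resulting construction into a natural operation on all varieties compatible with proper push-forward.

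More concretely, first I would fix an envelope $f\colon Y \to X$ with $Y$ quasi-projective, which exists by \cite[Lemma 18.3 (3)]{Fulton98}. Since $X$ is separated, the closed immersion $Y \times_X Y \hookrightarrow Y \times_F Y$ exhibits $Y \times_X Y$ as a closed subscheme of a quasi-projective variety, hence quasi-projective; and the two projections $p_1,p_2 \colon Y \times_X Y \to Y$ are proper as base changes of $f$. Definition \ref{def:homological_Adams} therefore defines $\psi_k$ on both $K_0'(Y)[1/k]$ and $K_0'(Y \times_X Y)[1/k]$, and makes it commute with $(p_1)_*$ and $(p_2)_*$. Consequently $\psi_k$ preserves the image of $(p_1)_* - (p_2)_*$; by the exact sequence of Lemma \ref{lemm:K0_cosheaf} (after inverting $k$), it therefore descends uniquely to an operation $\psi_k^{(f)}$ on $K_0'(X)[1/k]$ satisfying $f_* \circ \psi_k = \psi_k^{(f)} \circ f_*$. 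This already establishes existence, and uniqueness given the envelope $f$.

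To prove independence of the envelope and compatibility with proper push-forwards $g \colon X' \to X$ between arbitrary varieties, I would proceed by the standard two-envelope comparison: given a second quasi-projective envelope $f'\colon Y' \to X$, the fiber product $Y \times_X Y'$ is again an envelope of $X$ (envelopes being closed under composition and base change \cite[Lemma 18.3]{Fulton98}) and is quasi-projective, and its two projections onto $Y$ and $Y'$ are proper morphisms between quasi-projective varieties, so Definition~\ref{def:homological_Adams} identifies $\psi_k^{(f)}$ with $\psi_k^{(f')}$. For compatibility with a proper $g\colon X' \to X$, I would pick quasi-projective envelopes $Y' \to X'$ and $Y \to X$ and an envelope of $X'$ that dominates both $Y'$ and $Y \times_X X'$ (again using \cite[Lemma 18.3]{Fulton98}), then read off the desired commutation from the quasi-projective case.

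The expected main obstacle is the bookkeeping needed to verify that these compatibilities hold \emph{coherently} across all choices simultaneously—i.e.\ that the envelope-indexed family of descended operations assemble into a single natural transformation. This is exactly the content that is absorbed into \cite[Proposition 5.2]{2nd}: once one knows (a) that $K_0'$ is a ``cosheaf'' for the envelope topology in the sense captured by Lemma \ref{lemm:K0_cosheaf}, and (b) that $\psi_k$ is already defined and compatible with proper push-forward on the full subcategory of quasi-projective varieties, that proposition produces the unique natural extension to all varieties. So my final step is simply to cite it, the preceding work having verified precisely its hypotheses.
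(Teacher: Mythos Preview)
Your proposal is correct and follows essentially the same approach as the paper: the paper's proof is a one-line invocation of Lemma~\ref{lemm:K0_cosheaf} together with the existence of quasi-projective envelopes \cite[Lemma 18.3 (3)]{Fulton98} and \cite[Proposition 5.2]{2nd}, and your write-up simply unpacks the hypotheses of that last citation in detail.
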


Using \eqref{lemm:psi_support:pullback} and the surjectivity of push-forward homomorphisms along envelopes, we see that the Adams operation $\psi_k$ commutes with the restriction to any open subscheme.\\

Let $k' \in \Z-\{0\}$ and let $X$ be a quasi-projective variety. Note that for any $a\in K_0(X)$
\begin{equation}
\label{eq:theta_kk'}
\theta^{k}(a) \cdot (\psi^k \circ \theta^{k'}(a)) = \theta^{kk'}(a) \in K_0(X)[1/kk'];
\end{equation}
this is immediate when $a$ is the class of a line bundle, and follows in general from the splitting principle. Combining \eqref{eq:theta_kk'} with \eqref{lemm:psi_support:compo}, \eqref{lemm:psi_support:cdot} and \eqref{compat:cdot_cap}, we deduce that
\begin{equation}
\label{eq:psi_kk'}
\psi_k \circ \psi_{k'} = \psi_{kk'} \colon K_0'(X)[1/kk'] \to K_0'(X)[1/kk'].
\end{equation}
By Proposition \ref{prop:non_quasiproj}, this formula remains valid when $X$ is an arbitrary variety.

\begin{dfn}
Let $X$ be a variety. Assume that there is a smooth variety $W$, and a regular closed immersion $i\colon X \to W$, with normal bundle $N$. The element
\[
\Tan_X := [\Tan_W|_X] -[N]\in K_0(X),
\]
does not depend on the choice of $W$ and $i$, and is called the \emph{virtual tangent bundle} of $X$ (see \cite[B.7.6]{Fulton98}).
\end{dfn}

\begin{lemma}
\label{lemm:psi_OX}
Let $X$ be a regular quasi-projective variety. Then
\[
\psi_k[\cO_X] = \theta^k(-\cTan_X)\cdot [\cO_X] \in K_0'(X)[1/k].
\]
\end{lemma}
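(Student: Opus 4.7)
The plan is to apply Definition \ref{def:homological_Adams} directly, using the fact that since $X$ is itself regular (hence smooth) and quasi-projective, we may take $W = X$ with $i = \id_X$ as the chosen embedding. In this case the normal bundle is zero, so the virtual tangent bundle agrees with $[\Tan_X]$, and the defining formula reads
\[
\psi_k(\alpha \cap [\cO_X]) = (\psi^k\alpha) \cap (\theta^k(-\cTan_X) \cdot [\cO_X])
\]
for every $\alpha \in K_0^X(X)$.

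Next I would specialize this to $\alpha = 1 = [\cO_X[0]] \in K_0^X(X)$. Unwinding the definition of the cap product on representing modules shows that $1 \cap [M] = [M]$ for any $M \in \cM(X)$, since $\cO_X[0] \otimes_{\cO_X} M = M[0]$ whose only nonzero homology is $M$ in degree $0$. In particular $1 \cap [\cO_X] = [\cO_X]$, and more generally $1 \cap b = b$ for any $b \in K_0'(X)$.

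The rest is immediate: by Lemma \ref{lemm:psi_support:1} we have $\psi^k 1 = 1 \in K_0^X(X)$, and therefore
\[
\psi_k[\cO_X] = \psi_k(1 \cap [\cO_X]) = (\psi^k 1) \cap (\theta^k(-\cTan_X) \cdot [\cO_X]) = 1 \cap (\theta^k(-\cTan_X) \cdot [\cO_X]) = \theta^k(-\cTan_X) \cdot [\cO_X],
\]
which is the claimed identity in $K_0'(X)[1/k]$.

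There is no real obstacle here; the statement is essentially a tautological unwinding of Definition \ref{def:homological_Adams} in the favorable case where $X$ embeds as itself. The only points to verify carefully are that $W = X$ is an admissible choice (using regularity/smoothness of $X$), that the cap product with the unit class is the identity on $K_0'(X)$, and that $\psi^k$ fixes the unit in $K_0^X(X)$ — each of which is already supplied by the lemmas above.
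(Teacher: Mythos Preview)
Your argument has a genuine gap: you claim ``regular (hence smooth)'', but this implication fails over an imperfect base field $F$, and the paper does not assume $F$ to be perfect. Definition \ref{def:homological_Adams} requires the ambient variety $W$ to be \emph{smooth}, so when $X$ is regular but not smooth you are not allowed to take $W=X$. In that situation there is no honest tangent bundle $\Tan_X$; the symbol $\Tan_X$ in the statement refers to the \emph{virtual} tangent bundle, defined via an embedding into a smooth variety, and this is precisely what must be compared with $\Tan_W$.

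The paper's proof addresses exactly this point. It chooses a closed immersion $i\colon X \to W$ with $W$ smooth and quasi-projective, so that $i$ is a regular immersion with normal bundle $N$ and $\Tan_X = [\Tan_W|_X]-[N]$ in $K_0(X)$. One then needs to pass from $\theta^k(-\cTan_W)$, which appears in the defining formula for $\psi_k$, to $\theta^k(-\cTan_X)$; the discrepancy is the factor $\theta^k(N^\vee)$, and it is supplied by the Adams Riemann--Roch theorem for the Gysin map $i_*\colon K_0^X(X)\to K_0^X(W)$ applied to $1$. Your shortcut is valid when $X$ happens to be smooth (in particular whenever $F$ is perfect), and in that special case it is indeed a cleaner argument; but as written it does not cover the general regular case.
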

\begin{proof}
Let $i\colon X \to W$ be a closed immersion, where $W$ is smooth and quasi-projective. Then $i$ is a regular closed immersion, let $N$ be its normal bundle. The Gysin homomorphism $i_* \colon K_0^X(X) \to K_0^X(W)$ is by definition the unique map compatible with the isomorphisms $K_0^X(X) \to K_0'(X)$ and $K_0^X(W) \to K_0'(X)$ of Lemma \ref{lemm:regular}. Then $(i_*1) \cap [\cO_W] = [\cO_X]$ in $K_0'(X)$ (see \eqref{eq:[i]}). By \eqref{lemm:psi_support:1} and the Adams Riemann--Roch theorem (see \cite[Th\'eor\`eme 3]{Soule85}, where $N$ should be replaced by $N^\vee$), we have in $K_0'(X)[1/k]$
\[
\psi_k[\cO_X] = (\psi^k \circ i_*1)\cap (\theta^k(-\cTan_W)\cdot [\cO_W]) = (\theta^k(N^\vee)\cdot (i_*1)) \cap (\theta^k(-\cTan_W)\cdot [\cO_W])),
\]
and the statement follows from \eqref{compat:cdot_cap}.
\end{proof}

\begin{lemma}
\label{lemm:psi_generic}
Let $X$ be an integral variety of dimension $d$. Then there is a nonempty open subscheme $U$ of $X$ such that $\psi_k[\cO_U] = k^{-d}[\cO_U]$ in $K_0'(U)[1/k]$.
\end{lemma}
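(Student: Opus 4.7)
The plan is to apply Lemma~\ref{lemm:psi_OX} on a nonempty open subscheme $U \subset X$ chosen so that the virtual cotangent bundle $\cTan_U$ has class $d \in K_0(U)$; formula \eqref{eq:theta_trivial} then gives $\theta^k(-\cTan_U) = k^{-d}$ and the conclusion follows immediately.

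The first step is to shrink $X$ so as to arrive in the setting of Lemma~\ref{lemm:psi_OX}. Since $X$ is of finite type over $F$ it is excellent, so its regular locus is open; this locus contains the generic point $\eta$ and is therefore nonempty. Replacing $X$ by a nonempty affine open regular subscheme, we may assume that $X$ is regular and quasi-projective. We then pick a closed immersion $i\colon X \to W$ with $W$ smooth and quasi-projective (for instance $W = \Pp^n$ after a projective closure). Because both $X$ and $W$ are regular, $i$ is automatically a regular closed immersion, producing the virtual tangent bundle $\Tan_X = [\Tan_W|_X] - [N] \in K_0(X)$, where $N$ is the normal bundle of $i$.

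The second step is to shrink $X$ further so that $\Tan_W|_X$ and $N$ become trivial. Both are genuine vector bundles, and their restrictions to the generic point $\eta$ are free (every finitely generated module over the field $F(X)$ is free). Spreading out chosen bases yields a nonempty open subscheme $U \subset X$ on which $\Tan_W|_U$ and $N|_U$ are trivial, of ranks $\dim W$ and $\dim W - d$ respectively. Consequently $[\cTan_U] = \dim W - (\dim W - d) = d$ in $K_0(U)$, so \eqref{eq:theta_trivial} gives $\theta^k(-\cTan_U) = k^{-d}$ in $K_0(U)[1/k]$. Lemma~\ref{lemm:psi_OX} applied to the regular quasi-projective variety $U$ then delivers
\[
\psi_k[\cO_U] = \theta^k(-\cTan_U) \cdot [\cO_U] = k^{-d}[\cO_U] \in K'_0(U)[1/k].
\]

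I do not expect a serious obstacle: the main point to be careful about is to work with regularity of $X$ (which holds generically via excellence, and which suffices because a closed immersion between regular schemes is automatically regular) rather than smoothness over $F$, since the latter could fail at the generic point when the base field is imperfect.
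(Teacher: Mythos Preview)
Your argument is correct and is essentially the same as the paper's: reduce to a regular quasi-projective open, write the virtual tangent bundle as a difference of genuine vector bundles, shrink so both become trivial, and apply Lemma~\ref{lemm:psi_OX} together with \eqref{eq:theta_trivial}. The only slip is the parenthetical ``for instance $W=\Pp^n$ after a projective closure'': having already made $X$ affine, the closed embedding you want is into $\mathbb{A}^n$, not $\Pp^n$ (an affine variety is not closed in $\Pp^n$, and passing to the projective closure would destroy regularity); this is harmless since you only need the existence of such a $W$.
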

\begin{proof}
Let $U$ be a quasi-projective regular nonempty open subscheme of $X$. The virtual tangent bundle $\Tan_U \in K_0(U)$ may be written as $[E] -[F]$, where $E,F$ are vector bundles over $U$. Shrinking $U$, we may assume that $E$ and $F$ are trivial, so that $\cTan_U =d \in K_0(U)$, and the statement follows from Lemma \ref{lemm:psi_OX} and \eqref{eq:theta_trivial}.
\end{proof}

\begin{proposition}\label{firstaction}
Let $X$ be a variety and $i\in \Z$. The operation $\psi_k$ acts on $C_i(X)[1/k]$ via multiplication by $k^{-i}$.
\end{proposition}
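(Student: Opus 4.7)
The plan is first to verify that $\psi_k$ descends to a well-defined operation on $C_i(X)[1/k]$, and then to evaluate it on the canonical generators, namely the classes $[\cO_Z]$ where $Z\subset X$ is an integral closed subvariety of dimension $i$.

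For the first step, I would use that $B_i(X) = \colim K'_0(Z)$ where $Z$ ranges over closed subschemes of $X$ of dimension at most $i$, as already noted in the paper. Since $\psi_k$ commutes with proper push-forward homomorphisms by Proposition \ref{prop:non_quasiproj}, the operations on the various $K'_0(Z)[1/k]$ assemble into a single operation on $B_i(X)[1/k]$ that is compatible with $\beta_{i-1}$, and therefore descends to the quotient $C_i(X)[1/k]$.

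For the second step, recall that under the identification $C_i(X) = \coprod_{x \in X_{(i)}} \Z$ provided by the localization sequence, the generator indexed by $x \in X_{(i)}$ is represented by $[\cO_Z]$ where $Z$ is the reduced closure of $x$. It thus suffices to prove that $\psi_k[\cO_Z] = k^{-i}[\cO_Z]$ modulo $\Im(\beta_{i-1})$. Here I would apply Lemma \ref{lemm:psi_generic} to $Z$ itself to obtain a nonempty open subscheme $U\subset Z$ with $\psi_k[\cO_U] = k^{-i}[\cO_U]$ in $K'_0(U)[1/k]$. Using that $\psi_k$ commutes with restriction to open subschemes (as observed immediately after Proposition \ref{prop:non_quasiproj}), the element $\psi_k[\cO_Z] - k^{-i}[\cO_Z]$ restricts to zero in $K'_0(U)[1/k]$. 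The localization sequence $K'_0(Z-U) \to K'_0(Z) \to K'_0(U) \to 0$ then shows that this difference comes from $K'_0(Z-U)[1/k]$, and since $\dim(Z-U)\leq i-1$, pushing forward to $X$ places it inside $\Im(\beta_{i-1})$. Hence it vanishes in $C_i(X)[1/k]$, as desired.

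The main conceptual hurdle is the bookkeeping in the first step: namely, pinning down precisely why the Adams operation of homological type assembles compatibly along the filtration $B_\bullet(X)$ of $K'_0(X)$, so that the target $C_i(X)[1/k]$ is a sensible recipient of the operation. Once this is granted, the computation on generators is an immediate combination of Lemma \ref{lemm:psi_generic}, compatibility with open restriction, and Quillen's localization sequence; no further input should be needed.
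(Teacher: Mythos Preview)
Your proposal is correct and follows essentially the same approach as the paper. The paper's proof is simply a terser version: it reduces immediately to the case where $X$ itself is integral of dimension $i$ (so that $C_i(X)=\Z$ is generated by the image of $[\cO_X]$) and then invokes Lemma~\ref{lemm:psi_generic}, which is exactly your Step~2 with $Z=X$.
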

\begin{proof}
We may assume that $X$ is integral of dimension $i$. Then $C_i(X)$ is the free abelian group generated by the image of $[\cO_X] \in B_i(X) = K_0'(X)$, and the proposition follows from Lemma \ref{lemm:psi_generic}.
\end{proof}

\subsection{Adams operations on divisor classes}
\begin{lemma}
\label{lemm:extend_section}
Let $L$ be a line bundle over a quasi-projective variety $X$. Let $s \in H^0(X,L)$. Then we may find
\begin{itemize}
\item a closed immersion $X \to W$ where $W$ is smooth and quasi-projective,
\item a line bundle $M$ over $W$ such that $M|_X=L$,
\item a regular section $t \in H^0(W,M)$ such that $t|_X=s$ and $\cZ(t)$ is smooth.
\end{itemize}
\end{lemma}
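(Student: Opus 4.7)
The plan is to construct $(W, M, t)$ directly via a projective bundle, arranging things so that $\cZ(t)$ is smooth by construction rather than via a Bertini-type perturbation.

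Using quasi-projectivity of $X$, fix a locally closed embedding $X \hookrightarrow \Pp^N$ and let $U := \Pp^N \setminus (\overline{X} \setminus X)$, so that $X$ is closed in the smooth quasi-projective scheme $U$. Set $\mathcal{A} := \cO_{\Pp^N}(1)|_X$, and choose $n \geq 1$ large enough that $L \otimes \mathcal{A}^n$ is globally generated on $X$; fix generators $\beta_1, \dots, \beta_r \in H^0(X, L \otimes \mathcal{A}^n)$. Form the rank-$(r+1)$ vector bundle $V := \cO_U(-n)^{\oplus r} \oplus \cO_U$ on $U$, set $W := \Pp(V)$ (in the Grothendieck convention: points correspond to rank-$1$ quotients, with $\pi_*\cO_W(1) = V$) with structural morphism $\pi\colon W \to U$, and put $M := \cO_W(1)$. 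Then $W$ is smooth and quasi-projective.

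The tuple $(\beta_1,\dots,\beta_r, s)$ assembles into a morphism $V|_X = (\mathcal{A}^{-n})^{\oplus r} \oplus \cO_X \to L$, which is surjective: at a point of $\cZ(s)$, the $\beta_i$'s span $L \otimes \mathcal{A}^n$ pointwise, hence also span $L$. By the universal property of $\Pp(V)$, this surjection corresponds to a section of $\pi$ over $X$, which composed with the closed immersion $\pi^{-1}(X) \hookrightarrow W$ (a base change of $X \hookrightarrow U$) yields a closed immersion $X \hookrightarrow W$ along which $M$ pulls back to $L$. The inclusion $\cO_U \hookrightarrow V$ of the last summand corresponds, via $\pi_*M = V$, to a section $t \in H^0(W, M)$, and chasing definitions gives $t|_X = s$.

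Next, $\cZ(t)$ is exactly the sub-projective-bundle $\Pp(\cO_U(-n)^{\oplus r}) \subset W$ cut out by killing the last summand of $V$. Since projectivization is invariant under tensoring with a line bundle, $\Pp(\cO_U(-n)^{\oplus r}) \cong \Pp(\cO_U^{\oplus r}) = U \times \Pp^{r-1}$, which is smooth. As $\cZ(t)$ has codimension $1$ in $W$, the section $t$ is regular.

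The key insight—and really the only nontrivial point—is to engineer the ambient $W$ so that the natural extension of $s$ already has a smooth zero locus, rather than extending $s$ arbitrarily and perturbing afterwards. With $V$ designed so that $t$ corresponds to the trivial summand, $\cZ(t)$ is forced to be a sub-projective-bundle of $W$, whose smoothness is inherited from $U$ and $\Pp^{r-1}$. The one nontrivial verification is the surjectivity of $V|_X \to L$, which rests entirely on the global generation of $L \otimes \mathcal{A}^n$.
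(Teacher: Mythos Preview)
Your proof is correct, but the paper takes a slicker and more economical route. Instead of a projective bundle, the paper uses the total space of a line bundle: citing \cite[Lemma~18.2]{Fulton98}, it finds a smooth quasi-projective $V$ containing $X$ as a closed subscheme together with a line bundle $W \to V$ satisfying $W|_X = L$; then $W$ itself (the total space) serves as the ambient variety, $M := W \times_V W$ is the pullback of $W$ along its own projection, and the diagonal $W \to W \times_V W$ is a regular section $t$ of $M$ whose zero locus is the zero section $V \subset W$, which is smooth by construction. The embedding $X \hookrightarrow W$ is simply $x \mapsto s(x) \in L_x = W_x$, and one checks directly that $M|_X = L$ and $t|_X = s$.

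Both arguments share the same philosophy you articulate in your final paragraph: engineer $W$ so that the extended section has smooth vanishing locus automatically. The difference is that the paper's construction is minimal (the ambient $W$ has dimension $\dim V + 1$) and requires no twisting, no global generation, and no choice of generators $\beta_i$; your construction is more elaborate (dimension $\dim U + r$) but has the virtue of being self-contained, since you build the extension of $L$ to the ambient variety explicitly as $\cO_W(1)$ rather than invoking an external lemma. Your surjectivity check for $V|_X \to L$ and the identification $\cZ(t) = \Pp(\cO_U(-n)^{\oplus r})$ are both correct.
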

\begin{proof}
By \cite[Lemma~18.2]{Fulton98}, we may find a smooth quasi-projective variety $V$ containing $X$ as a closed subscheme, and a line bundle $W \to V$ such that $W|_X=L$. Let $M=W \times_V W$, and view $M$ as a line bundle over $W$ via the first projection. The diagonal $W \to W \times_V W$ may be considered as a regular section $t$ of $M$ whose vanishing locus is $V$ (embedded in $W$ as the zero-section). We view $X$ as a closed subscheme of $W$ using the composite
\[
X \xrightarrow{s} L = W|_X \to W,
\]
where the last morphism is the base change of the immersion $X \to V$. The statements are then easily verified.
\end{proof}

\begin{lemma}
\label{lemm:psi_div}
Let $L$ be a line bundle over a quasi-projective variety $X$, and $s$ a regular section of $L$. Set $Y=\cZ(s)$. Then we have in $K_0'(Y)[1/k]$
\[
\psi_k [\cO_Y] = [K(s)] \cap (\theta^k(L^\vee) \cdot \psi_k[\cO_X]).
\]
\end{lemma}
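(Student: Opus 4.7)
The plan is to reduce to a regular-closed-immersion situation by embedding $X$ into a smooth ambient variety. First, apply Lemma~\ref{lemm:extend_section} to obtain a closed immersion $\iota\colon X \to W$ with $W$ smooth quasi-projective, a line bundle $M$ on $W$ with $M|_X = L$, and a regular section $t$ of $M$ with $t|_X = s$ and $\cZ(t)$ smooth. Write $j\colon \cZ(t) \to W$ for the inclusion; this is a regular closed immersion with conormal bundle $M^\vee|_{\cZ(t)}$, and $Y = X \cap \cZ(t)$ scheme-theoretically. The two smooth quasi-projective varieties $W$ and $\cZ(t)$ can both serve as ambient in Definition~\ref{def:homological_Adams}: the strategy is to compute $\psi_k[\cO_X]$ using $W$, compute $\psi_k[\cO_Y]$ using $\cZ(t)$, and transfer between the two formulas.

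Let $\alpha \in K_0^X(W)$ and $\gamma \in K_0^Y(\cZ(t))$ be the unique classes (Lemma~\ref{lemm:regular}) with $\alpha \cap [\cO_W] = [\cO_X]$ and $\gamma \cap [\cO_{\cZ(t)}] = [\cO_Y]$. The first observation is that $\gamma = j^*\alpha$: indeed, a direct complex-level computation gives $(j^*\alpha)\cap[\cO_{\cZ(t)}] = \alpha\cap[\cO_{\cZ(t)}]$, and then \eqref{eq:K(s)_Z(s)} together with \eqref{compat:cap_commutes} yields
\[
(j^*\alpha)\cap[\cO_{\cZ(t)}] = \alpha \cap \bigl([K(t)]\cap[\cO_W]\bigr) = [K(t)]\cap[\cO_X] = [\cO_Y],
\]
whence $\gamma = j^*\alpha$ by uniqueness. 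Then \eqref{lemm:psi_support:pullback} gives $\psi^k\gamma = j^*\psi^k\alpha$, and the conormal exact sequence for $j$ yields $\cTan_{\cZ(t)} = j^*\cTan_W - j^*M^\vee$, hence $\theta^k(-\cTan_{\cZ(t)}) = j^*\bigl(\theta^k(-\cTan_W)\,\theta^k(M^\vee)\bigr)$.

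Substituting into Definition~\ref{def:homological_Adams} applied with the smooth ambient $\cZ(t)$,
\[
\psi_k[\cO_Y] = (j^*\psi^k\alpha) \cap \bigl(j^*(\theta^k(-\cTan_W)\,\theta^k(M^\vee))\cdot [\cO_{\cZ(t)}]\bigr).
\]
From here the proof is a sequence of formal rearrangements. First, pull the Chern class factors out of the cap product via \eqref{compat:cdot_cap}. Next, rewrite $[\cO_{\cZ(t)}]=[K(t)]\cap[\cO_W]$ using \eqref{eq:K(s)_Z(s)} and commute $[K(t)]$ past $\psi^k\alpha$ via \eqref{compat:cap_commutes}. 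Convert $[K(t)]\cap b$ into $[K(s)]\cap b$ for $b \in K_0'(X)[1/k]$ using the complex-level identity $K(t)\otimes_{\cO_W}F = K(s)\otimes_{\cO_X}F$ for any $\cO_X$-module $F$ (which in turn follows from $\iota^*[K(t)]=[K(s)]$). Absorb the $\theta^k(-\cTan_W)$ factor using the equality $\psi_k[\cO_X] = \theta^k(-\cTan_W)|_X \cdot (\psi^k\alpha \cap [\cO_W])$ coming from \eqref{compat:cdot_cap}. A final application of \eqref{compat:cdot_cap} to move $\theta^k(L^\vee) = \iota^*\theta^k(M^\vee)$ inside the cap then produces the stated formula.

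The main obstacle is the careful bookkeeping with $K$-theory with supports as one navigates between the three ambient spaces $W$, $\cZ(t)$, and $X$. Each rearrangement is essentially an instance of Lemma~\ref{lemm:compat}, but identifying the correct version at each step, together with the complex-level checks behind $\gamma = j^*\alpha$ and $[K(t)] \cap b = [K(s)] \cap b$ for $b$ supported on $X$, requires some attention. Once the right sequence of manipulations is pinned down the algebra is formal.
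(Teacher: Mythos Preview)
Your proposal is correct and follows essentially the same route as the paper: embed via Lemma~\ref{lemm:extend_section}, identify $[\cO_Y]=(j^*\alpha)\cap[\cO_{\cZ(t)}]$, compute $\psi_k[\cO_Y]$ using the smooth ambient $\cZ(t)$, and then unwind via the compatibilities of Lemma~\ref{lemm:compat}. The only cosmetic difference is that you name the class $\gamma$ and invoke uniqueness from Lemma~\ref{lemm:regular} to get $\gamma=j^*\alpha$, whereas the paper establishes $[\cO_Y]=(j^*\alpha)\cap[\cO_{\cZ(t)}]$ directly by a chain of equalities; the subsequent rearrangements are the same.
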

\begin{proof}
Let us apply Lemma \ref{lemm:extend_section} and use its notation. By Lemma \ref{lemm:regular}, there is an element $\alpha \in K_0^X(W)$ such that
\begin{equation}
\label{eq:alpha}
\alpha \cap [\cO_W] = [\cO_X] \in K_0'(X).
\end{equation}
Let $V = \cZ(t)$ and $j\colon V \to W$ be the closed immersion. We have in $K_0'(Y)$
\begin{align*}
[\cO_Y]
&= [K(s)] \cap [\cO_X]  && \text{by \eqref{eq:K(s)_Z(s)}} \\
&= [K(t)] \cap (\alpha \cap [\cO_W]) && \text{by \eqref{eq:alpha} and \eqref{compat:cap_pullback}}\\
&=  \alpha \cap ([K(t)] \cap [\cO_W])&&\text{by \eqref{compat:cap_commutes}} \\
&= \alpha \cap [\cO_V]  && \text{by \eqref{eq:K(s)_Z(s)}} \\
&= (j^* \alpha) \cap [\cO_V] && \text{by \eqref{compat:cap_pullback}.}
\end{align*}
Since $[\Tan_V] = [\Tan_W|_V] - [M|_V]$ in $K_0(V)$, we have in $K_0'(Y)[1/k]$
\begin{align*}
\psi_k[\cO_Y]
&= \psi_k(j^*\alpha \cap [\cO_V]) = \psi^k(j^*\alpha) \cap (\theta^k(-\cTan_V) \cdot [\cO_V])\\
&= (\psi^k \alpha) \cap (\theta^k(-\cTan_V) \cdot [\cO_V])&& \text{by \eqref{compat:cap_pullback}, \eqref{lemm:psi_support:pullback}}\\
&= (\psi^k \alpha) \cap (\theta^k(M^\vee|_V) \theta^k(-\cTan_W|_V) \cdot ([K(t)] \cap [\cO_W])) && \text{by \eqref{eq:K(s)_Z(s)}}\\
&= [K(t)] \cap (\theta^k(L^\vee) \cdot ((\psi^k\alpha) \cap (\theta^k(-\cTan_W) \cdot [\cO_W])))&&\text{by \eqref{compat:cap_commutes}, \eqref{compat:cdot_cap}}\\
&= [K(s)] \cap (\theta^k(L^\vee) \cdot \psi_k[\cO_X]) && \text{by \eqref{compat:cap_pullback}, \eqref{eq:alpha}.}\qedhere
\end{align*}
\end{proof}

\begin{proposition}
\label{prop:2_sections}
Let $X$ be an integral quasi-projective variety of dimension $d$. Let $L$ be a line bundle over $X$, and $s_1,s_2$ regular sections of $L$. Then we may find a closed subscheme $Z \subsetneq X$ containing $\cZ(s_1)$ and $\cZ(s_2)$ as closed subschemes, and such that
\[
\psi_k([\cO_{\cZ(s_1)}] - [\cO_{\cZ(s_2)}]) = k^{1-d}([\cO_{\cZ(s_1)}] - [\cO_{\cZ(s_2)}]) \in K_0'(Z)[1/k].
\]
\end{proposition}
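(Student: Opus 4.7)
The plan is to combine Lemma \ref{lemm:psi_div}, which expresses $\psi_k[\cO_{\cZ(s)}]$ as $[K(s)] \cap (\theta^k(L^\vee) \cdot \psi_k[\cO_X])$, with Lemma \ref{lemm:psi_generic} (to pin down $\psi_k[\cO_X]$ modulo a class supported on a proper closed subscheme) and Lemma \ref{lemm:forget_supports} (to eliminate the resulting error in the difference $\psi_k[\cO_{\cZ(s_1)}] - \psi_k[\cO_{\cZ(s_2)}]$ after push-forward).

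Since the case $d=0$ is vacuous ($\cZ(s_j) = \varnothing$), assume $d \geq 1$. By Lemma \ref{lemm:psi_generic}, together with the fact that $\psi_k$ commutes with restriction to open subschemes (noted after Proposition \ref{prop:non_quasiproj}), pick a nonempty open $U \subset X$, disjoint from $\cZ(s_1) \cup \cZ(s_2)$, with $\psi_k[\cO_U] = k^{-d}[\cO_U]$. Let $Z \subsetneq X$ be a closed subscheme with underlying set $X \setminus U$ and containing $\cZ(s_1)$ and $\cZ(s_2)$ as closed subschemes (e.g.\ with defining ideal $\mathcal{I}_{\cZ(s_1)} \cap \mathcal{I}_{\cZ(s_2)} \cap \mathcal{I}_{(X\setminus U)_{\mathrm{red}}}$); denote by $i \colon Z \to X$ and $\iota_j \colon \cZ(s_j) \to Z$ the closed immersions.

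The key intermediate step is to produce $\rho \in K_0'(Z)[1/k]$ satisfying
\[
\theta^k(L^\vee) \cdot \psi_k[\cO_X] = k^{1-d}[\cO_X] + i_* \rho \in K_0'(X)[1/k].
\]
Since $\psi_k$ is compatible with restriction to $U$, the class $\psi_k[\cO_X] - k^{-d}[\cO_X]$ vanishes in $K_0'(U)[1/k]$ and hence, by the localization sequence, equals $i_*\nu_0$ for some $\nu_0 \in K_0'(Z)[1/k]$. Next, writing $\tau^k(c) - k = c\,\sigma^k(c)$ with $\sigma^k \in \Z[[c]]$, we have $\theta^k(L^\vee) - k = (1-[L^\vee]) \cdot \sigma^k(1-[L^\vee]) \in K_0(X)$; combined with $(1-[L^\vee])\cdot[\cO_X] = [\cO_{\cZ(s_1)}]$ from \eqref{eq:OZs} and the projection formula, this shows $(\theta^k(L^\vee) - k)\cdot[\cO_X]$ is the push-forward of a class in $K_0'(\cZ(s_1)) \subset K_0'(Z)$. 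A final projection-formula computation assembles these two pieces into the displayed identity.

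Substituting into Lemma \ref{lemm:psi_div} and using \eqref{eq:K(s)_Z(s)} and \eqref{compat:cap_pushforward_2}, we obtain
\[
\psi_k[\cO_{\cZ(s_j)}] = k^{1-d}[\cO_{\cZ(s_j)}] + [K(s_j)] \cap \rho \in K_0'(\cZ(s_j))[1/k].
\]
Pushing forward via $\iota_j$ and applying \eqref{compat:cap_pushforward_1} yields
\[
(\iota_j)_* \psi_k[\cO_{\cZ(s_j)}] = k^{1-d}(\iota_j)_*[\cO_{\cZ(s_j)}] + \widetilde{[K(s_j)]} \cap \rho \in K_0'(Z)[1/k],
\]
where $\widetilde{[K(s_j)]}$ denotes the image of $[K(s_j)]$ in $K_0^X(X)$. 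Subtracting for $j=1,2$, Lemma \ref{lemm:forget_supports} gives $\widetilde{[K(s_1)]} = \widetilde{[K(s_2)]}$, so the $\rho$-terms cancel; since $\psi_k$ commutes with proper push-forward, the left-hand side of the difference equals $\psi_k$ applied to $(\iota_1)_*[\cO_{\cZ(s_1)}] - (\iota_2)_*[\cO_{\cZ(s_2)}]$, which is the claim. The main subtlety is the bookkeeping of supports: the scheme structure on $Z$ and the particular variant of Lemma \ref{lemm:compat} invoked at each step must be chosen consistently so that all cap products, restrictions, and push-forwards land in the intended groups.
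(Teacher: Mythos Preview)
Your proof is correct and follows the same approach as the paper's: choose $U$ via Lemma~\ref{lemm:psi_generic}, build $Z$ from the intersection of the relevant ideal sheaves, establish $\theta^k(L^\vee)\cdot\psi_k[\cO_X]=k^{1-d}[\cO_X]+i_*\rho$, feed this into Lemma~\ref{lemm:psi_div}, push forward to $K_0'(Z)$, and cancel the error terms using Lemma~\ref{lemm:forget_supports}. The only minor difference is in how the displayed intermediate identity is obtained: the paper simply notes that $L|_U$ is trivial (as $s_1$ is nowhere vanishing on $U$), so $\theta^k(L^\vee|_U)=k$ and a single application of the localization sequence to the product suffices, whereas you split the computation into two pieces (first $\psi_k[\cO_X]-k^{-d}[\cO_X]$, then $(\theta^k(L^\vee)-k)\cdot[\cO_X]$ via the factorization $\tau^k(c)-k=c\,\sigma^k(c)$ and the projection formula); both routes work, the paper's being slightly shorter.
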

\begin{proof}
Since the sections $s_1,s_2$ are regular, we may find a nonempty open subscheme $U$ of $X$ which does not meet $\cZ(s_1) \cup \cZ(s_2)$. Then $L|_U$ is trivial. Shrinking $U$, we may assume that $\psi_k[\cO_U] = k^{-d} [\cO_U]$ in $K_0'(U)[1/k]$ by Lemma \ref{lemm:psi_generic}. Let $Z'$ be the reduced closed complement of $U$ in $X$. The intersection of the ideal sheaves of $Z', \cZ(s_1), \cZ(s_2)$ in $\cO_X$ defines a closed subscheme $Z \subset X$ whose open complement is $U$, and we have closed immersions $j_n \colon \cZ(s_n) \to Z$ for $n\in \{1,2\}$. Since $\theta^k(L^\vee|_U) = k$ by \eqref{eq:theta_trivial}, we have
\[
\theta^k(L^\vee|_U) \cdot \psi_k[\cO_U] =  k^{1-d} [\cO_U] \in K_0'(U)[1/k].
\]
It follows from the localization sequence \cite[\S7, Proposition 3.2]{Quillen73} that
\begin{equation}
\label{eq:theta_L_psi_OX}
\theta^k(L^\vee) \cdot \psi_k[\cO_X]= k^{1-d} [\cO_X] +i_*z\in K_0'(X)[1/k]
\end{equation}
where $z \in K_0'(Z)[1/k]$, and $i \colon Z \to X$ is the closed immersion. By Lemma \ref{lemm:forget_supports}, the image $\sigma \in K_0^X(X)$ of $[K(s_n)] \in K_0^{\cZ(s_n)}(X)$ does not depend on $n \in \{1,2\}$. For such $n$, we have in $K_0'(Z)[1/k]$
\begin{align*}
\psi_k \circ (j_n)_*[\cO_{\cZ(s_n)}] &=(j_n)_* \circ \psi_k[\cO_{\cZ(s_n)}]\\
&= (j_n)_*([K(s_n)] \cap (\theta^k(L^\vee) \cdot \psi_k[\cO_X])) && \text{by Lemma \ref{lemm:psi_div}} \\
&= k^{1-d} (j_n)_*[\cO_{\cZ(s_n)}] + (j_n)_*([K(s_n)] \cap i_*z) && \text{by \eqref{eq:K(s)_Z(s)}, \eqref{eq:theta_L_psi_OX}}\\
&= k^{1-d} (j_n)_*[\cO_{\cZ(s_n)}] + \sigma \cap z&& \text{by \eqref{compat:cap_pushforward_2}, \eqref{compat:cap_pushforward_1}}.
\end{align*}
The statement follows.
\end{proof}

Combining Propositions \ref{prop:2_sections} and \ref{prop:ker}, we obtain:
\begin{proposition}\label{secondaction}
Let $X$ be a variety and $i \in \Z$. The operation $\psi_k$ acts on $A_i(X)[1/k]$ via multiplication by $k^{-i}$.
\end{proposition}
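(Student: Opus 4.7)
The plan is to reduce to the explicit generators of $A_i(X)$ supplied by Proposition \ref{prop:ker}, and then invoke the local identity of Proposition \ref{prop:2_sections} combined with compatibility of $\psi_k$ with proper push-forward. Before starting, I would note that $\psi_k$ is well-defined on $A_i(X)[1/k]$: since $B_i(X) = \colim_Z K_0'(Z)$ with $Z$ ranging over closed subvarieties of $X$ of dimension at most $i$, and the transition maps together with $\beta_i\colon B_i(X) \to B_{i+1}(X)$ are all proper push-forwards, Proposition \ref{prop:non_quasiproj} shows that $\psi_k$ descends to an operator on $B_i(X)[1/k]$ preserving the subgroup $A_i(X)[1/k]$.

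By Proposition \ref{prop:ker}, the group $A_i(X)$ is generated by elements of the form $f_*([\cO_{\cZ(s_1)}] - [\cO_{\cZ(s_2)}])$, where $f\colon Y \to X$ is a proper morphism from an integral quasi-projective variety $Y$ of dimension $i+1$, and $s_1,s_2$ are regular sections of a common line bundle on $Y$. Fix such a generator and apply Proposition \ref{prop:2_sections} to $Y$ with $d=i+1$. This produces a closed subscheme $Z \subsetneq Y$ containing both $\cZ(s_1)$ and $\cZ(s_2)$, together with the identity
\[
\psi_k([\cO_{\cZ(s_1)}] - [\cO_{\cZ(s_2)}]) = k^{-i}\bigl([\cO_{\cZ(s_1)}] - [\cO_{\cZ(s_2)}]\bigr) \in K_0'(Z)[1/k].
\]

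Since $Y$ is integral of dimension $i+1$ and $Z \subsetneq Y$ is a proper closed subscheme, one has $\dim Z \leq i$, and the composite $g\colon Z \hookrightarrow Y \xrightarrow{f} X$ is proper with push-forward factoring through $B_i(X)$. Applying $g_*$ to the identity above and using that $\psi_k$ commutes with proper push-forward yields
\[
\psi_k\bigl(f_*([\cO_{\cZ(s_1)}] - [\cO_{\cZ(s_2)}])\bigr) = k^{-i}\, f_*([\cO_{\cZ(s_1)}] - [\cO_{\cZ(s_2)}]) \in B_i(X)[1/k],
\]
which is the required formula on each generator. I do not expect a substantive obstacle: the two cited propositions are exactly tailored to this reduction, and the only minor point to monitor is the book-keeping that ensures every identity is interpreted in the correct filtered piece $B_i(X)[1/k]$ rather than merely in $K_0'(X)[1/k]$.
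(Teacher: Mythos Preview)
Your proposal is correct and follows exactly the approach the paper intends: the paper's own proof is the single sentence ``Combining Propositions \ref{prop:2_sections} and \ref{prop:ker}, we obtain'', and you have spelled out precisely this combination, including the bookkeeping that $d=i+1$ gives the exponent $k^{-i}$ and that $\dim Z\leq i$ ensures the identity lands in $B_i(X)[1/k]$.
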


\section{Applications of the Adams operations}
\label{applications}

\subsection{Inverting small primes}
For every nonzero integer $k$, the homological operation $\psi_k$ on the groups $K'_0(Z)[1/k]$ for all closed subschemes $Z\subset X$ yields an operation (still denoted $\psi_k$) on $B_i(X)[1/k]=\Colim_{\dim Z\leq i}K'_0(Z)[1/k]$ that commutes with the Bott homomorphisms $\beta_i$. Thus, $B_\bullet(X)[1/k]$ is an endo-module over the ring $\Z[1/k][t]$, where $t$ acts via $\psi_k$.

\begin{proposition}\label{psiact}
The operation $\psi_k$ acts on the derivative $A_i(X)^{(s)}[1/k]$ via multiplication by $k^{-s-i}$ and on $C_i(X)^{(s)}[1/k]$, $\CH_i(X)[1/k]$ and $K'_0(X)_{(i/i-1)}[1/k]$ via multiplication by $k^{-i}$ for every $i$.
\end{proposition}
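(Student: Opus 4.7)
The plan is induction on $s$, using Propositions \ref{firstaction} and \ref{secondaction} as the base case $s=0$ (interpreting $A^{(0)}_i = A_i(X)$ and $C^{(0)}_i = C_i(X)$, so that $k^{-s-i}$ becomes $k^{-i}$, matching both propositions).

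The preliminary step is to verify that $\psi_k$ is compatible with the full derived endo-module structure. By the paragraph preceding the statement, $\psi_k$ commutes with the Bott homomorphisms $\beta_i$ on $B_\bullet(X)[1/k]$; hence it preserves $A_i=\Ker(\beta_i)$ and descends to $C_i=\Coker(\beta_{i-1})$, so it commutes with $\alpha_i$, $\gamma_i$, and the differential $\delta_i=\gamma_i\circ\alpha_i$. Since $B^{(1)}_i=\Im(\beta_i)\subset B_{i+1}$ is preserved by $\psi_k$ and $\beta^{(1)}_i$ is the restriction of $\beta_{i+1}$, the same formal argument, applied to $B^{(1)}_\bullet$, shows that $\psi_k$ commutes with $\beta^{(1)}_i$, $\alpha^{(1)}_i$, $\gamma^{(1)}_i$ and $\delta^{(1)}_i$. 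Iterating, $\psi_k$ restricts to each $B^{(s)}_i[1/k]$ and commutes with all the associated maps.

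For the induction step, I use the canonical isomorphisms obtained by applying the $s=1$ formulas of Section \ref{endomod} to the endo-module $B^{(s)}_\bullet$:
\[
A^{(s+1)}_i \simeq \Ker\bigl(\delta^{(s)}_{i+1}\colon A^{(s)}_{i+1}\to C^{(s)}_{i+1}\bigr), \qquad C^{(s+1)}_i \simeq \Coker\bigl(\delta^{(s)}_i\colon A^{(s)}_i\to C^{(s)}_i\bigr).
\]
By the induction hypothesis, $\psi_k$ acts on $A^{(s)}_{i+1}[1/k]$ as multiplication by $k^{-s-(i+1)}=k^{-(s+1)-i}$ and on $C^{(s)}_i[1/k]$ as multiplication by $k^{-i}$; the compatibility of $\psi_k$ with $\delta^{(s)}$ established above propagates these eigenvalues to the subobject $A^{(s+1)}_i[1/k]$ and to the quotient $C^{(s+1)}_i[1/k]$, completing the induction.

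The statements for $\CH_i(X)[1/k]$ and $K'_0(X)_{(i/i-1)}[1/k]$ then follow immediately: the former is the case $s=1$ of the formula for $C^{(s)}_i$, and for the latter the isomorphism $\colim_s C^{(s)}_i(X)\iso K'_0(X)_{(i/i-1)}$ recalled in Section \ref{endomod}, together with the naturality of $\psi_k$, gives the action as $k^{-i}$ on the colimit. I do not anticipate a serious obstacle: the whole argument is a formal propagation of eigenvalues through subquotients, filtered colimits, and iterated derivation; the only nontrivial geometric input is Propositions \ref{firstaction} and \ref{secondaction}, which are already in hand.
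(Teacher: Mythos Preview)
Your proof is correct and follows essentially the same approach as the paper's. The paper is slightly more terse: instead of the explicit induction on $s$, it simply observes in one line that $A_i(X)^{(s)}$ is a submodule of $A_{s+i}(X)$ and $C_i(X)^{(s)}$ is a factor module of $C_i(X)$ (and $K'_0(X)_{(i/i-1)}$ a factor module of $\CH_i(X)$), which amounts to the same iterated subquotient argument you spell out.
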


\begin{proof}
By Propositions \ref{firstaction} and \ref{secondaction},  $\psi_k$ acts on $A_i(X)[1/k]$ and $C_i(X)[1/k]$ via multiplication by $k^{-i}$. Note that $A_i(X)^{(s)}$ is a submodule of $A_{s+i}(X)$, $C_i(X)^{(s)}$ is a factor module of $C_i(X)$, $\CH_i(X)=C_i(X)^{(1)}$ and $K'_0(X)_{(i/i-1)}$ is a factor module of $\CH_i(X)$.
\end{proof}

It follows from Proposition \ref{psiact} that for every nonzero integer $k$ and every $a\in A_i(X)^{(s)}[1/k]$, we have
\[
k^{-i}\cdot\delta^{(s)}_i(a)=\psi_k(\delta^{(s)}_i(a))=\delta^{(s)}_i(\psi_k(a))=k^{-s-i}\cdot\delta^{(s)}_i(a),
\]
hence every element in
\[
\Im\hspace{0.5mm} [A_i(X)^{(s)}\xra{\delta^{(s)}_i} C_i(X)^{(s)}]=\Ker\hspace{0.5mm} [C_i(X)^{(s)} \tto C_i(X)^{(s+1)}]
\]
is killed by $k^m(k^s-1)$ for some $m\geq 0$.

We consider \emph{supernatural numbers} $k^\infty(k^s-1)$ (see \cite[I.1.3]{Serre97}) and write
\[
N_s:=\gcd k^\infty(k^s-1)
\]
over all $k>1$. For a prime integer $p$ and integer $i>0$ the group $(\Z/p^i\Z)^\times$ is cyclic of order $(p-1)p^{i-1}$ unless $p=2$ and $i\geq 3$ in which case this group is of exponent $2^{i-2}$. It follows that $N_s=2$ if $s$ is odd and
\[
N_s=2^{v_2(s)+2}\cdot\Prod p^{v_p(s)+1},
\]
if $s$ is even, where the product is taken over the set of all prime integers $p$ such that $p-1$ divides $s$ (here $v_p$ is the $p$-adic valuation). For example, $N_2=24$, $N_4=240$, $N_6=2520$,\ldots

We proved the following:

\begin{proposition}\label{maintool}
Let $s$ be a positive integer and $X$ a variety. Then every element in the kernel of the homomorphism $C_i(X)^{(s)} \tto C_i(X)^{(s+1)}$ is killed by $N_s$.\qed
\end{proposition}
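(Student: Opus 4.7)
The plan is to repackage the calculation already carried out just before the statement into a uniform annihilator. Let $c$ be an element of $\Ker(C_i(X)^{(s)} \tto C_i(X)^{(s+1)})$, so $c = \delta^{(s)}_i(a)$ for some $a \in A_i(X)^{(s)}$. By the computation preceding the statement (an application of Proposition \ref{psiact} together with the commutation of $\delta^{(s)}_i$ with $\psi_k$), for every integer $k > 1$ there exists $m = m(c,k) \geq 0$ such that $k^m(k^s - 1) \cdot c = 0$ in $C_i(X)^{(s)}$. In particular $c$ is torsion; let $n$ denote its order.

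Since the explicit formula for $N_s$ recalled just above the statement displays it as a finite positive integer, showing $N_s \cdot c = 0$ is equivalent to showing $n \mid N_s$, which I will do prime by prime. Fix a prime $p$. Any $k > 1$ with $p \nmid k$ yields $v_p(k^m(k^s-1)) = v_p(k^s-1)$, and since $k^m(k^s-1)$ annihilates $c$ we have $n \mid k^m(k^s-1)$, hence
\[
v_p(n) \leq v_p(k^s - 1).
\]
Minimising over all such $k$ and using that the factor $k^\infty$ contributes $\infty$ to the $p$-adic valuation whenever $p \mid k$, we get
\[
v_p(n) \leq \min_{\substack{k > 1 \\ p \nmid k}} v_p(k^s - 1) = \min_{k > 1} v_p\bigl(k^\infty(k^s - 1)\bigr) = v_p(N_s),
\]
where the last equality is the definition of the supernatural $\gcd$ $N_s$. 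Since this holds for every prime $p$ and $N_s$ has only finitely many prime divisors (primes $p$ with $v_p(N_s) = 0$ automatically satisfy $v_p(n)=0$), we conclude $n \mid N_s$, and hence $N_s \cdot c = 0$.

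There is essentially no obstacle: the geometric and $K$-theoretic content has been absorbed into the action of $\psi_k$ via Propositions \ref{firstaction}, \ref{secondaction}, \ref{psiact}, and what remains is the elementary reconciliation of the family of annihilators $\{k^m(k^s-1)\}_{k>1}$ with the single integer $N_s$. The only care needed is to note that $N_s$, although defined as a supernatural number, is actually an ordinary integer, which is visible from the explicit product formula given in the preceding discussion.
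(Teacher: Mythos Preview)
Your proof is correct and follows the paper's own argument. The paper in fact gives no separate proof (the \texttt{\textbackslash qed} appears immediately after the statement), regarding the result as a direct consequence of the computation just above; your write-up simply makes explicit the elementary number-theoretic step---passing from the family of annihilators $k^{m}(k^s-1)$ to the single integer $N_s$ via a prime-by-prime valuation comparison---that the paper leaves to the reader.
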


Write $\Z_{(p)}$ for the localization of $\Z$ by the prime ideal $p\Z$. Note that if $p$ is a prime divisor of $N_s$, then $p-1$ divides $s$. It follows from Proposition \ref{maintool} that $C_i(X)^{(s)}\tens \Z_{(p)}\tto C_i(X)^{(s+1)}\tens \Z_{(p)}$ is an isomorphism if $p-1$ does not divides $s$. We have proved:

\begin{corollary}(see \cite[Theorem 3.4]{Merkurjev10})\label{divis}
All the differentials in the $s$th derivative of $B_\bullet(X)\tens \Z_{(p)}$ are trivial if $s$ is not divisible by $p-1$.\qed
\end{corollary}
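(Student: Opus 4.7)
The plan is to deduce this corollary directly from Proposition \ref{maintool}, which has already been established via the Adams operations. By construction of the derived endo-module, the image of each differential $\delta_i^{(s)}\colon A_i(X)^{(s)} \to C_i(X)^{(s)}$ coincides with the kernel of the canonical surjection $C_i(X)^{(s)} \tto C_i(X)^{(s+1)}$. Hence, showing that $\delta_i^{(s)}\otimes \Z_{(p)}$ vanishes amounts to showing that this surjection becomes an isomorphism after localization at $p$.

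By Proposition \ref{maintool}, the kernel of the surjection above is annihilated by the integer $N_s$. I would then invoke the explicit formula for $N_s$ recalled just before the statement: whether $s$ is odd (in which case $N_s = 2$, so the only prime divisor is $2$, and $2-1=1\mid s$) or $s$ is even (in which case by definition only primes $q$ with $q-1\mid s$ appear in the factorization), every prime divisor $q$ of $N_s$ satisfies $q-1\mid s$. Under the hypothesis $p-1\nmid s$ we therefore conclude $p\nmid N_s$, so $N_s$ is a unit in $\Z_{(p)}$. Consequently any $\Z$-module annihilated by $N_s$ vanishes after tensoring with $\Z_{(p)}$, and the kernel of $C_i(X)^{(s)}\tens \Z_{(p)}\tto C_i(X)^{(s+1)}\tens \Z_{(p)}$ is zero, giving the required triviality of $\delta_i^{(s)}\tens \Z_{(p)}$ for all $i$.

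There is no real obstacle in this deduction; the substantive content lies in Proposition \ref{maintool}, whose proof in turn rested on the fact, established in Proposition \ref{psiact}, that $\psi_k$ acts on $A_i(X)^{(s)}[1/k]$ by $k^{-s-i}$ and on $C_i(X)^{(s)}[1/k]$ by $k^{-i}$. The remaining step in the present corollary is the elementary arithmetic observation on the prime divisors of $N_s$, which is already spelled out in the paragraph preceding the statement.
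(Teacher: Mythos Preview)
Your proof is correct and follows essentially the same route as the paper: the argument is the short paragraph immediately preceding the corollary, which observes that any prime divisor $p$ of $N_s$ satisfies $p-1\mid s$, so under the hypothesis $N_s$ becomes a unit in $\Z_{(p)}$ and Proposition~\ref{maintool} forces the surjection $C_i(X)^{(s)}\otimes\Z_{(p)}\tto C_i(X)^{(s+1)}\otimes\Z_{(p)}$ to be an isomorphism. Your identification of $\Im(\delta_i^{(s)})$ with $\Ker\big(C_i(X)^{(s)}\tto C_i(X)^{(s+1)}\big)$ is exactly the relation $C_i^{(s+1)}=\Coker(\delta_i^{(s)})$ built into the definition of the derived endo-module.
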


It follows from Proposition \ref{comp} that the kernel of $\varphi_i$ is killed by the product $N_1N_2\cdots N_{d-i-1}$. Every prime divisor $p$ of the product is such that $p-1$ divides an integer $s\leq d-i-1$, hence $p\leq d-i$. We have proved:

\begin{theorem}\label{locallocal}
Let $X$ be a variety of dimension $d$. Then for every $i=0,1,\dots, d$, the map $\varphi_i$ is an isomorphism when localized by $(d-i)!$.
\end{theorem}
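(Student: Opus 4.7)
The plan is to combine Proposition \ref{comp}, which identifies the kernel of $\varphi_i$, with Proposition \ref{maintool}, which bounds the exponent of the kernel of each transition $C_i(X)^{(s)} \twoheadrightarrow C_i(X)^{(s+1)}$, and then perform a prime-divisor analysis of the integers $N_s$ to show that no prime exceeding $d-i$ can appear.

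First I would apply Proposition \ref{comp} with $s = d-i$: the map $\varepsilon_i^{(d-i)} \colon C_i(X)^{(d-i)} \to K_0'(X)_{(i/i-1)}$ is an isomorphism, and $\varphi_i$ factors as the composition
\[
\CH_i(X) = C_i(X)^{(1)} \twoheadrightarrow C_i(X)^{(2)} \twoheadrightarrow \cdots \twoheadrightarrow C_i(X)^{(d-i)} \xrightarrow{\ \sim\ } K_0'(X)_{(i/i-1)}.
\]
Since $\varphi_i$ is already surjective, it suffices to kill its kernel, which is contained in the kernel of the surjection $\CH_i(X) \twoheadrightarrow C_i(X)^{(d-i)}$. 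By Proposition \ref{maintool}, that kernel is annihilated by the product $N_1 N_2 \cdots N_{d-i-1}$.

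Next I would analyze the prime divisors of this product. Recall the explicit formula: $N_s = 2$ if $s$ is odd, and for $s$ even, $N_s = 2^{v_2(s)+2} \prod_p p^{v_p(s)+1}$, where the product is over primes $p$ such that $p-1$ divides $s$. Thus if a prime $p$ divides some $N_s$ with $1 \le s \le d-i-1$, then either $p = 2$ (in which case $p \le d-i$ since the theorem is trivial for $i = d$), or $p - 1$ divides $s$, which forces $p \le s+1 \le d-i$. Hence every prime dividing $N_1 \cdots N_{d-i-1}$ is at most $d-i$, and therefore divides $(d-i)!$.

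The conclusion follows immediately: after tensoring with $\Z[1/(d-i)!]$, the integer $N_1 \cdots N_{d-i-1}$ becomes a unit, so the kernel of $\varphi_i$ vanishes, and the surjection $\varphi_i$ becomes an isomorphism. There is no real obstacle here — everything has been set up in the previous sections — but the one subtlety worth being careful about is the boundary case $i = d-1$, where the product $N_1 \cdots N_{d-i-1}$ is an empty product (equal to $1$), reflecting the fact that $\varphi_{d-1}$ is already an isomorphism integrally, consistent with Proposition \ref{comp} for $s = 1$.
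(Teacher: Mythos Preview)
Your proposal is correct and follows essentially the same approach as the paper: factor $\varphi_i$ via Proposition~\ref{comp}, bound the kernel by $N_1\cdots N_{d-i-1}$ using Proposition~\ref{maintool}, and observe that every prime dividing this product is at most $d-i$. The paper's argument is slightly more compact in that it handles $p=2$ uniformly (since $p-1=1$ divides every $s$), whereas you separate it out unnecessarily; also, your parenthetical ``since the theorem is trivial for $i=d$'' is not quite the right justification --- the correct reason $2\le d-i$ holds is simply that the existence of some $s$ with $1\le s\le d-i-1$ already forces $d-i\ge 2$.
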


\begin{remark}
If $X$ is a smooth variety of dimension $d$, an application of Chern classes and Riemann--Roch theorem imply that $(d-i-1)!\cdot\Ker(\varphi_i)=0$ for every $i>0$ (see \cite[Example 15.3.6]{Fulton98}).
\end{remark}

\begin{proposition}\label{kill}
Let $X$ be a variety. Then the kernel of the Bott homomorphism $\CK_i(X)\to \CK_{i+1}(X)$ is killed by $N_1 N_2\cdots N_{i+1}$ for every $i\geq 0$. In particular, the Bott homomorphism is injective when localized by $(i+2)!$.
\end{proposition}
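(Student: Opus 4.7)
The plan is to show that $\Ker(\beta \colon \CK_i(X) \to \CK_{i+1}(X))$, which by construction equals $A^{(1)}_i(X)$, is annihilated by $N_1 N_2 \cdots N_{i+1}$, and then to deduce the divisibility assertion from the list of prime divisors of the $N_s$. The engine of the argument is the short exact sequence
\[
0 \to A^{(s+1)}_{j-1}(X) \to A^{(s)}_j(X) \to \Im(\delta^{(s)}_j) \to 0
\]
obtained by applying to the endo-module $B^{(s)}_\bullet(X)$ the identification $A^{(1)}_{j-1} \simeq \Ker(\delta_j)$ recorded in Section \ref{endomod}. The rightmost term coincides with $\Ker(C^{(s)}_j \tto C^{(s+1)}_j)$ and is therefore killed by $N_s$, according to Proposition \ref{maintool}.

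With this in hand, I would anchor the induction at $j=-1$: since $B_{-1}(X)=0$, a one-line induction on $s$ gives $B^{(s)}_{-1}(X)=0$, hence $A^{(s)}_{-1}(X)=0$, for every $s\geq 0$. Running the exact sequence above with $(s,j)$ equal successively to $(i+1,0), (i,1), \ldots, (1,i)$ then propagates the bound: the case $(i+1,0)$ yields that $A^{(i+1)}_0(X)$ is killed by $N_{i+1}$, and each subsequent case multiplies the current annihilator by the relevant $N_s$. At the final step, $A^{(1)}_i(X)$ is killed by $N_1 N_2 \cdots N_{i+1}$, as claimed.

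For the ``in particular'' statement, I would invoke the description of the prime divisors of $N_s$ given just before Corollary \ref{divis}: they are $2$ together with those primes $p$ such that $p-1$ divides $s$, all of which satisfy $p\leq s+1$. Hence every prime dividing $N_1\cdots N_{i+1}$ is at most $i+2$ and therefore divides $(i+2)!$, so inverting $(i+2)!$ kills the Bott kernel. I do not foresee a serious obstacle; the argument is essentially a telescoping bookkeeping exercise once the short exact sequence and the vanishing at $j=-1$ are in place. The one point worth checking explicitly is that the identification $A^{(1)}_{j-1}\simeq \Ker(\delta_j)$ is legitimately applicable to each derived endo-module $B^{(s)}_\bullet(X)$, which is automatic since a derivative of an endo-module is again an endo-module.
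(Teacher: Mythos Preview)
Your proposal is correct and follows essentially the same argument as the paper. The paper packages the same chain of short exact sequences as an induction on $i$, proving the slightly stronger statement that $A_i(X)^{(s)}$ is killed by $N_s N_{s+1}\cdots N_{s+i}$ for every $s\geq 1$, whereas you walk down the chain $(s,j)=(i+1,0),(i,1),\dots,(1,i)$ directly; the key step in both is the identification $A^{(s)}_j/A^{(s+1)}_{j-1}\simeq \Ker(C^{(s)}_j\tto C^{(s+1)}_j)$ together with Proposition~\ref{maintool}.
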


\begin{proof}
We need to prove that $A_i(X)^{(1)}$ is killed by $N_1 N_2\cdots N_{i+1}$. By induction on $i$ we show that $A_i(X)^{(s)}$ is killed by $N_s N_{s+1}\cdots N_{s+i}$ for every $s\geq 1$. The statement is clear if $i<0$ since $A_i(X)^{(s)}=0$ in this case.

$(i-1)\Rightarrow i$: The factor group $A_i(X)^{(s)}/A_{i-1}(X)^{(s+1)}$ is isomorphic to the kernel of $C_i(X)^{(s)} \tto C_i(X)^{(s+1)}$ and hence is killed by $N_s$ by Proposition \ref{maintool}. By induction, $A_{i-1}(X)^{(s+1)}$ is killed by $N_{s+1}\cdots N_{s+i}$. The result follows.
\end{proof}

\begin{corollary}
Let $X$ be a variety of dimension $d$. Then the associated endo-module $\CK_\bullet(X)$ degenerates when localized by $d!$.
\end{corollary}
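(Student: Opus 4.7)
The plan is to deduce this from Proposition \ref{kill} together with the $d$-stability of $B_\bullet(X)$ and a bookkeeping argument on the prime divisors of the integers $N_s$. Recall that an endo-module is degenerate exactly when every component map is injective, so the goal is to show that after tensoring with $\Z[1/d!]$, each Bott homomorphism $\beta \colon \CK_i(X) \to \CK_{i+1}(X)$ becomes injective.

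First I would observe that, since $B_\bullet(X)$ is $d$-stable by construction, Lemma \ref{props}(1)(a) gives that $\CK_\bullet(X) = B_\bullet(X)^{(1)}$ is $(d-1)$-stable. This already handles the range $i \geq d-1$: the relevant kernel is zero before any localization. It therefore suffices to treat the finite range $0 \leq i \leq d-2$, where Proposition \ref{kill} asserts that the kernel of $\beta$ is annihilated by $N_1 N_2 \cdots N_{i+1}$, a divisor of $N_1 N_2 \cdots N_{d-1}$.

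The remaining step is to verify that every prime divisor of $N_1 N_2 \cdots N_{d-1}$ is at most $d$. This is immediate from the explicit description of the prime factorization of $N_s$ recalled just before Proposition \ref{maintool}: a prime $p$ divides $N_s$ only if $p-1$ divides $s$, so for $s \leq d-1$ we obtain $p \leq s+1 \leq d$. Since $d!$ is divisible by every prime $\leq d$, inverting $d!$ kills the annihilator $N_1 N_2 \cdots N_{d-1}$, so each of the finitely many kernels vanishes after localization, which is precisely the degeneracy of $\CK_\bullet(X) \otimes \Z[1/d!]$. No real obstacle arises; the argument is essentially packaging Proposition \ref{kill} together with this arithmetic observation on the $N_s$'s.
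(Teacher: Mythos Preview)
Your proposal is correct and follows the paper's intended route: the corollary is stated without proof immediately after Proposition \ref{kill}, and your argument is precisely the unpacking of that proposition's ``in particular'' clause (injectivity after localizing at $(i+2)!$) together with the $(d-1)$-stability of $\CK_\bullet(X)$. The only omission is the trivial case $i<0$, where $\CK_i(X)=0$ since $B_i(X)=0$, so the Bott map is vacuously injective there.
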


\subsection{Direct sum decompositions}

\begin{theorem}\label{th:section}
For every variety $X$ and integer $i\geq 0$, the homomorphism
\[
K_0'(X)_{(i)}[1/(i+1)!] \tto K_0'(X)_{(i/i-1)}[1/(i+1)!]
\]
admits a section, compatibly with proper push-forward homomorphisms.
\end{theorem}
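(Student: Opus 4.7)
The strategy is to build an idempotent operator $\tilde L$ on $K_0'(X)_{(i)}[1/(i+1)!]$ as a $\Z[1/(i+1)!]$-linear combination of homological Adams operations $\psi_k$, then improved by idempotent lifting, and to use its image as the direct summand isomorphic to $K_0'(X)_{(i/i-1)}[1/(i+1)!]$. By Proposition~\ref{psiact}, $\psi_k$ acts on the graded piece $K_0'(X)_{(j/j-1)}$ by multiplication by $k^{-j}$; for $k\in\{1,\dots,i+1\}$ the number $1/k$ lies in $\Z[1/(i+1)!]$, so each such $\psi_k$ acts on $K_0'(X)[1/(i+1)!]$.

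I first seek coefficients $a_1,\dots,a_{i+1}\in\Z[1/(i+1)!]$ so that $L:=\sum_{k=1}^{i+1}a_k\psi_k$ acts as $\delta_{ji}$ on the graded piece of index $j$; this amounts to solving the system $\sum_{k=1}^{i+1}a_k k^{-j}=\delta_{ji}$ for $j=0,\dots,i$. The coefficient matrix is Vandermonde in $1,1/2,\dots,1/(i+1)$, and a direct calculation gives
\[
\frac{1}{|\det V|}=\frac{((i+1)!)^i}{\prod_{d=1}^{i}d^{\,i+1-d}}=(i+1)^i\prod_{d=1}^{i}d^{\,d-1},
\]
an integer whose prime factors are all $\leq i+1$; hence $\det V$ is a unit in $\Z[1/(i+1)!]$ and the $a_k$ exist there. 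Since each $\psi_k$ preserves the filtration (compatibility with push-forwards from closed subvarieties), $L^2-L$ vanishes on every graded piece and therefore strictly lowers the filtration, so $(L^2-L)^{i+1}=0$. A standard idempotent-lifting procedure, e.g.\ iterating $L\mapsto 3L^2-2L^3$ a bounded number of times, yields $\tilde L=P(L)$ for some $P\in\Z[T]$ with $\tilde L^2=\tilde L$ and $\tilde L$ still acting as $\delta_{ji}$ on each graded.

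The restriction of $\tilde L$ to $K_0'(X)_{(i-1)}[1/(i+1)!]$ is simultaneously idempotent and nilpotent, hence zero, so $K_0'(X)_{(i-1)}[1/(i+1)!]\subseteq\ker\tilde L$; the reverse inclusion follows from $p\circ\tilde L=p$ (with $p$ the projection onto $K_0'(X)_{(i/i-1)}[1/(i+1)!]$), which is immediate because $\tilde L$ acts as $1$ on the top graded piece. Hence $K_0'(X)_{(i)}[1/(i+1)!]=\ker(\tilde L)\oplus\Im(\tilde L)$, and $p$ restricts to an isomorphism $\Im(\tilde L)\xrightarrow{\sim}K_0'(X)_{(i/i-1)}[1/(i+1)!]$, whose inverse composed with inclusion is the desired section. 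Compatibility with proper push-forwards is inherited from each $\psi_k$ (Proposition~\ref{prop:non_quasiproj}) through the polynomials $L$ and $\tilde L$. The main obstacle is the Vandermonde unit computation above: using a single $\psi_k$ would force inverting primitive prime divisors of $k^m-1$ for $m\leq i$, which typically exceed $i+1$ by Zsigmondy's theorem; averaging over the $(i+1)$ distinct values $k\in\{1,\dots,i+1\}$ is precisely what allows the relevant denominators to be controlled by $(i+1)!$.
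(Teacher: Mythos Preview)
Your proof is correct, and it takes a genuinely different route from the paper's. The paper fixes a single $k>1$ and forms the Lagrange interpolation operator
\[
\sigma_k=\prod_{j=0}^{i-1}\frac{\psi_k-k^{-j}}{k^{-i}-k^{-j}},
\]
which, because each factor lowers the filtration by one on $N=K_0'(X)_{(i-1)}$, kills $N$ outright and is the identity modulo $N$; thus $\sigma_k$ is already the desired projector, but only after inverting $r_k=k\prod_{j=1}^{i}(k^j-1)$. The paper then shows that the $r_k$ generate the unit ideal of $\Z[1/(i+1)!]$ and that the various $\sigma_k$ agree on overlaps (using commutativity \eqref{eq:psi_kk'}), so Zariski descent over $\Spec\Z[1/(i+1)!]$ glues them to a single global projector.

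Your approach instead works globally from the start: you take a linear combination of $\psi_1,\dots,\psi_{i+1}$, all of which already act on $K_0'(X)[1/(i+1)!]$, and the Vandermonde computation shows the coefficients lie in $\Z[1/(i+1)!]$. The price is that your $L$ is only an idempotent on the associated graded, so you need the (standard) nilpotent idempotent-lifting step; the paper's $\sigma_k$ avoids this because its product form annihilates $N$ directly. Conversely, the paper's argument requires the descent/gluing step and the observation that the $r_k$ generate the unit ideal, which your construction bypasses entirely. Both constructions yield projectors that are polynomials in Adams operations, hence inherit push-forward functoriality. One minor point worth making explicit: the number of iterations of $T\mapsto 3T^2-2T^3$ you perform should be chosen uniformly in terms of $i$ alone (e.g.\ $\lceil\log_2(i+1)\rceil$ suffices since $(L^2-L)^{i+1}=0$), so that $\tilde L=P(L)$ for a \emph{fixed} polynomial $P$ independent of $X$; this is what guarantees the resulting section is natural in $X$.
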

\begin{proof}
For every integer $k >1$, let $r_k = k\cdot\Prod_{j=1}^i (k^j-1) \in \Z[1/(i+1)!]$. If $p> i+1$ is a prime integer and $k>1$ is such that the congruence class $k+p\Z$ is a generator of $(\Z/p\Z)^\times$, then since $p-1>i$, the integer $r_k$ is not divisible by $p$. It follows that the elements $r_k$ for $k>1$ generate the unit ideal in $\Z[1/(i+1)!]$.

Let $M = K_0'(X)_{(i)}[1/(i+1)!]$. For each integer $k >1$, consider the endomorphism
\[
\sigma_k := \Prod_{j=0}^{i-1} \frac{\psi_k - k^{-j}}{k^{-i} - k^{-j}} \colon M[1/r_k] \to M[1/r_k].
\]
Let $N = K_0'(X)_{(i-1)}[1/(i+1)!]$. It follows from Proposition \ref{psiact} that each $\sigma_k$ vanishes on $N[1/r_k]$ and coincides with the identity modulo $N[1/r_k]$. Thus for any $k,k'>1$, we have $\sigma_k = \sigma_k \circ \sigma_{k'}$ and $\sigma_{k'} = \sigma_{k'} \circ \sigma_k$ on $M[1/r_kr_{k'}]$. Since $\sigma_k$ commutes with $\sigma_{k'}$ by \eqref{eq:psi_kk'}, we deduce that $\sigma_k$ and $\sigma_{k'}$ coincide on $M[1/r_kr_{k'}]$. By Zariski descent, there is a unique endomorphism of $M$ whose localization is $\sigma_k$ for each $k>1$. That endomorphism vanishes on $N$ and coincides with the identity modulo $N$, hence induces the required section. The functoriality follows from that of the operations $\psi_k$.
\end{proof}

Theorem \ref{th:section} provides a functorial decomposition
\begin{equation}
\label{eq:decomp_K_i}
K_0'(X)_{(i)}[1/(i+1)!] \simeq \Coprod_{j=0}^i  K_0'(X)_{(j/j-1)}[1/(i+1)!].
\end{equation}
Taking appropriate colimits, we get the following:

\begin{corollary}
Let $X$ be a variety. Then for every $i\geq 0$ there are subgroups $\CK_i(X)^{[j]}\subset \CK_i(X)[1/(j+1)!]$ for all $j=0,1,\dots, i,$ functorial with respect to proper morphisms and such that
\[
\CK_i(X)[1/(i+1)!]=\Coprod_{j=0}^i \CK_i(X)^{[j]}[1/(i+1)!].
\]
Moreover, the localized Bott homomorphism $\CK_{i-1}(X)[1/(i+1)!]\to \CK_i(X)[1/(i+1)!]$ maps $\CK_{i-1}(X)^{[j]}[1/(i+1)!]$ into $\CK_i(X)^{[j]}[1/(i+1)!]$ for all $j=0,1,\dots, i-1$.
\end{corollary}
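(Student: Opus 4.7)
The plan is to adapt the proof of Theorem \ref{th:section}, with $\CK_i(X)$ in place of $K_0'(X)$ and the following ``dimension'' filtration replacing the topological one: set
\[
\CK_i(X)_{(j)} := \Im\bigl(\CK_j(X)\to\CK_i(X)\bigr)
\]
via the iterated Bott homomorphism, yielding a finite filtration $0=\CK_i(X)_{(-1)}\subset \CK_i(X)_{(0)}\subset\cdots\subset \CK_i(X)_{(i)}=\CK_i(X)$. The operation $\psi_k$ acts on $B_\bullet(X)[1/k]$ commuting with the Bott maps, so it descends to $\CK_i(X)[1/k]$ and preserves this filtration. The key eigenvalue computation is this: the surjection $\CK_j(X) \tto \CK_i(X)_{(j/j-1)}$ kills $\beta\CK_{j-1}(X)$, hence by \eqref{twoiso} factors through $\CK_j(X)/\beta\CK_{j-1}(X) \simeq \CH_j(X)$; Proposition \ref{psiact} then shows $\psi_k$ acts on $\CK_i(X)_{(j/j-1)}[1/k]$ by multiplication by $k^{-j}$.

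Once this eigenvalue data is in hand, the construction in the proof of Theorem \ref{th:section} applies verbatim to $\CK_i(X)_{(j)}[1/(j+1)!]$: for each integer $k>1$ one forms
\[
\sigma_k^{[j]} := \prod_{m=0}^{j-1} \frac{\psi_k - k^{-m}}{k^{-j}-k^{-m}}
\]
on $\CK_i(X)_{(j)}[1/(j+1)!][1/r_k]$, where $r_k := k\prod_{m=1}^{j}(k^m - 1)$. The same counting of prime divisors as in the proof of Theorem \ref{th:section} shows that the $r_k$ generate the unit ideal in $\Z[1/(j+1)!]$, and \eqref{eq:psi_kk'} ensures the $\sigma_k^{[j]}$ patch by Zariski descent to a canonical idempotent $\sigma^{[j]}$ on $\CK_i(X)_{(j)}[1/(j+1)!]$, vanishing on $\CK_i(X)_{(j-1)}[1/(j+1)!]$ and inducing the identity on the quotient $\CK_i(X)_{(j/j-1)}[1/(j+1)!]$. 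One then sets $\CK_i(X)^{[j]} := \Im(\sigma^{[j]}) \subset \CK_i(X)[1/(j+1)!]$; this is functorial for proper morphisms by functoriality of $\psi_k$, and yields a splitting $\CK_i(X)_{(j)}[1/(j+1)!] = \CK_i(X)_{(j-1)}[1/(j+1)!] \oplus \CK_i(X)^{[j]}$. Iterating and inverting $(i+1)!$ gives the desired direct sum decomposition of $\CK_i(X)[1/(i+1)!]$.

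The iterated Bott maps $\CK_{i-1}(X) \to \CK_i(X)$ respect the filtrations $\CK_\bullet(X)_{(j)}$ by construction and commute with each $\psi_k$, hence with each $\sigma^{[j]}$, so they carry $\CK_{i-1}(X)^{[j]}$ into $\CK_i(X)^{[j]}$. The only step that substantively departs from the proof of Theorem \ref{th:section} is the eigenvalue computation on the graded pieces, which is the main (but mild) obstacle; once the detour through $\CH_j(X)$ via \eqref{twoiso} is made, the remainder is a mechanical repetition.
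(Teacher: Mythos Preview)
Your argument is correct and is essentially the paper's approach unwound: the paper simply says ``Taking appropriate colimits'' of the decomposition \eqref{eq:decomp_K_i}, using that $\CK_i(X)=\colim_{\dim Z\le i+1} K_0'(Z)_{(i)}$ and that the idempotents on each $K_0'(Z)_{(i)}$ are compatible with proper push-forwards. Your filtration $\CK_i(X)_{(j)}$ is exactly the colimit of the filtrations $K_0'(Z)_{(j)}$, your eigenvalue computation via the surjection $\CH_j(X)\twoheadrightarrow \CK_i(X)_{(j/j-1)}$ is the colimit of the corresponding statement for each $Z$, and the idempotents you build coincide with the colimits of the $\sigma^{[j]}$ already constructed in Theorem~\ref{th:section}; so the two presentations differ only in whether one passes to the colimit before or after constructing the splitting.
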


Combining \eqref{eq:decomp_K_i} with Theorem \ref{locallocal}, we obtain
\begin{corollary}
\label{cor:CH_K0}
If $X$ is a variety of dimension $d$, we have
\[
\CH(X)[1/(d+1)!] \simeq K_0'(X)[1/(d+1)!].
\]
These isomorphisms are compatible with proper push-forward homomorphisms.
\end{corollary}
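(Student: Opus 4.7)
The plan is to assemble the corollary by combining the two ingredients explicitly cited in the statement. First I would apply Theorem~\ref{locallocal}: since $(d-i)!$ divides $(d+1)!$ for every $i\in\{0,\dots,d\}$, inverting $(d+1)!$ turns each surjection $\varphi_i$ into an isomorphism
\[
\varphi_i[1/(d+1)!]\colon \CH_i(X)[1/(d+1)!] \iso K_0'(X)_{(i/i-1)}[1/(d+1)!].
\]

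Next I would specialise the decomposition \eqref{eq:decomp_K_i} to $i=d$. Since $X$ has dimension $d$, we have $K_0'(X)=K_0'(X)_{(d)}$, and therefore
\[
K_0'(X)[1/(d+1)!] \simeq \Coprod_{j=0}^{d} K_0'(X)_{(j/j-1)}[1/(d+1)!].
\]
Substituting the isomorphisms from the previous step termwise yields
\[
K_0'(X)[1/(d+1)!] \simeq \Coprod_{j=0}^{d} \CH_j(X)[1/(d+1)!] = \CH(X)[1/(d+1)!],
\]
which is the required isomorphism.

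For functoriality, I would observe that both ingredients are already stated to be compatible with proper push-forwards: Theorem~\ref{th:section} provides the decomposition \eqref{eq:decomp_K_i} compatibly with $f_*$, and the homomorphisms $\varphi_i$ come from the endo-module $B_\bullet(X)$, whose structure (and therefore that of $\CH_i$ and $K_0'(X)_{(i/i-1)}$) is functorial in proper morphisms. Assembling these two compatibilities gives the required naturality of the isomorphism under proper push-forward. I do not anticipate a genuine obstacle here; the only mild subtlety is book-keeping to ensure that the single denominator $(d+1)!$ suffices simultaneously for all $i\leq d$ on both sides, which is handled by the divisibility $(d-i)!\mid(d+1)!$ and $(i+1)!\mid(d+1)!$.
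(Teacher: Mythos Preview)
Your proposal is correct and matches the paper's approach exactly: the corollary is stated as an immediate consequence of combining the decomposition \eqref{eq:decomp_K_i} (specialised to $i=d$) with Theorem~\ref{locallocal}, and your write-up simply spells out these two steps and the functoriality check.
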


\begin{remark}
The homomorphism $K_0'(X)_{(d)} \tto K_0'(X)_{(d/d-1)}$ certainly admits a section, since its target is freely generated by the classes $[\cO_Z]$ where $Z$ runs over the $d$-dimensional irreducible components of $X$. Therefore, in fact
\[
\CH(X)[1/d!] \simeq K_0'(X)[1/d!].
\]
However, these isomorphisms are not compatible with proper push-forward homomorphisms in general. For instance, let $X$ be the Severi--Brauer variety of a central division algebra of prime degree $p$ over $F$. Then $d=p-1$ and $K_0'(X) \to K_0'(\Spec F)$ is surjective (as $\chi(X,\cO_X)=1$), but $\CH(X)[1/(p-1)!] \to \CH(\Spec F)[1/(p-1)!]$ is not (because $X$ has no closed point of degree prime to $p$).
\end{remark}

The functoriality in Corollary \ref{cor:CH_K0} implies the following statement (see \cite[Theorem 5.1 (ii)]{invariants}, or \cite[Proposition 1.2]{ELW} for the smooth case):
\begin{corollary}
\label{cor:index_Euler}
Let $X$ be a complete variety of dimension $d$. Then
\begin{itemize}
\item the set of Euler characteristics $\chi(X,\cF)$ of coherent $\cO_X$-modules $\cF$, and

\item the set of degrees of closed points of $X$
\end{itemize}
generate the same ideal in $\Z[1/(d+1)!]$.
\end{corollary}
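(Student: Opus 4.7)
The plan is to apply Corollary \ref{cor:CH_K0} to the structural morphism $\pi \colon X \to \Spec F$, which is proper because $X$ is complete. By that corollary, together with its asserted compatibility with proper push-forwards, we obtain a commutative square
\[
\xymatrix{
\CH(X)[1/(d+1)!] \ar[r]^-{\sim} \ar[d]_-{\pi_*} & K_0'(X)[1/(d+1)!] \ar[d]^-{\pi_*} \\
\CH(\Spec F)[1/(d+1)!] \ar[r]^-{\sim} & K_0'(\Spec F)[1/(d+1)!].
}
\]
Under the canonical identifications $K_0'(\Spec F) = \Z$ (via $\dim_F$) and $\CH(\Spec F) = \CH_0(\Spec F) = \Z$, the lower horizontal arrow is the identity of $\Z[1/(d+1)!]$: indeed, for the zero-dimensional variety $\Spec F$, the map $\varphi_0$ of Proposition \ref{comp} sends $[\Spec F]$ to $[\cO_{\Spec F}]$, which generates $K_0'(\Spec F) = \Z$. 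Consequently the images of the two vertical arrows coincide as subgroups of $\Z[1/(d+1)!]$.

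Next I would identify these two images explicitly. On the right, the properness of $\pi$ ensures that each $H^i(X,\cF)$ is a finite-dimensional $F$-vector space, and $\pi_*[\cF]$ corresponds to the Euler characteristic $\chi(X,\cF)=\sum_i (-1)^i \dim_F H^i(X,\cF) \in \Z$. Since $K_0'(X)$ is generated by classes of coherent modules, the image of $\pi_* \colon K_0'(X) \to \Z$ equals the ideal generated by the integers $\chi(X,\cF)$. On the left, push-forward to the zero-dimensional target $\Spec F$ vanishes on $\CH_i(X)$ for $i>0$, while on $\CH_0(X)$ it sends the class of a closed point $x \in X$ to $[F(x):F] \in \Z$; hence the image of $\pi_* \colon \CH(X) \to \Z$ equals the ideal generated by the degrees of the closed points of $X$.

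Combining these two identifications with the equality of images read off from the diagram yields the corollary. There is no real obstacle: the substance has already been accomplished in Corollary \ref{cor:CH_K0}, and the only thing left to verify is that the horizontal isomorphism specialises to the identity over $\Spec F$, which is immediate from the constructions in Section \ref{applications} (the filtration on $K_0'(\Spec F)$ is concentrated in degree $0$, and $\varphi_0$ is tautologically an isomorphism there).
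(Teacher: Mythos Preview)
Your proposal is correct and follows exactly the approach the paper intends: the paper states the corollary as an immediate consequence of the functoriality in Corollary~\ref{cor:CH_K0}, and you have simply made explicit the commutative square for $\pi\colon X\to\Spec F$ and the identification of the two push-forward images. There is nothing to add.
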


\begin{remark}
Let $X$ be a complete variety of dimension $d$. Consider the integers
\[
n_X = \gcd_{x \in X_{(0)}} [F(x):F] \;\quad \text{ and } \;\quad d_X = \gcd_{\cF \in \cM(X)} \chi(X,\cF).
\]
Then $d_X \mid n_X$. It follows from Proposition \ref{psiact} that $n_X \mid N_1\cdots N_d \cdot d_X$. Thus if $p$ is a prime number, we have (here $v_p$ is the $p$-adic valuation)
\[
v_p(n_X) \leq v_p(d_X) + \Big\lfloor \frac{d}{p-1}\Big\rfloor  + \sum_{i=1}^{\lfloor d/(p-1) \rfloor} v_p(i).
\]
This bound coincides with that of \cite[Theorem 5.1 (ii)]{invariants} if $d < p(p-1)$, but is not sharp anymore if $d \geq p(p-1)$, at least when $\characteristic F \neq p$ (see \cite[Theorem 5.1 (i)]{invariants}).
\end{remark}

\subsection{Connective \texorpdfstring{$K$}{K}-groups of smooth varieties}

Let $X$ be a smooth variety. We will adopt cohomological notation (upper indices, graded by codimension) and write $\CK^i(X)$, $\CH^i(X)$, $A^i(X)$, $B^i(X)$, $C^i(X)$, etc. The $s$th derivative of $B^\bullet(X)$ will be denoted $B^\bullet(X)_{(s)}$. The graded group $\CK^\bullet(X)$ has a structure of a commutative ring (see \cite{Cai08}). The Bott homomorphisms are multiplications by the \emph{Bott element} $\beta\in \CK^{-1}(X)$. By (\ref{twoiso}), there are canonical ring isomorphisms
\[
\CK^\bullet(X)/(\beta)\simeq \CH^\bullet(X)\quad\text{and} \quad \CK^\bullet(X)/(\beta-1)\simeq K_0(X).
\]

\begin{example}
Let $A$ be a central division algebra of prime degree $p$ over $F$ and $G=\SL_1(A)$ the algebraic group of reduced norm $1$ elements in $A$. Then $K_0(G)=\Z$ (see \cite[Theorem 6.1]{Suslin91a}) and $\CH^*(G)=\Z[\sigma]/(p\sigma,\sigma^p)$, where $\sigma\in \CH^{p+1}(G)$, by \cite[Theorem 9.7]{KM18}. In other words,
\[
\CH^{i}(G)=
\left\{
  \begin{array}{ll}
  \ \Z, & \hbox{if $i=0$;} \\
  (\Z/p\Z)\sigma^j, & \hbox{if $i=(p+1)j$ and $j=1,2,\dots, p-1$;} \\
   \ 0, & \hbox{otherwise.}
  \end{array}
\right.
\]

By Corollary \ref{divis}, all differentials in the $s$th derivative $B^\bullet(G)_{(s)}$ are trivial if $1\leq s<p-1$. It follows that
\[
A^i(G)_{(p-1)}=A^{i-p+2}(G)_{(1)}
\]
for every $i$ and
\[
A^{(p+1)j}(X)_{(p-1)}\subset B^{(p+1)j}(X)_{(p-1)}=\beta^{p-2}B^{(p+1)j}(X)_{(1)}=\beta^{p-2}\CK^{(p+1)j}(X),
\]
\[
C^{(p+1)j}(X)_{(p-1)}=C^{(p+1)j}(X)_{(1)}=\CH^{(p+1)j}(X)=(\Z/p\Z)\sigma^j.
\]
for every $j=1,2,\dots, p-1$.

By \cite[Lemma 3.4]{KM18}, the differential
\[
A^{p+1}(X)_{(p-1)}\to C^{p+1}(X)_{(p-1)}=\CH^{p+1}(X)=(\Z/p\Z)\sigma
\]
is surjective. Choose a pre-image $\theta \in A^{p+1}(X)_{(p-1)}$ of $\sigma$. As $A^{p+1}(X)_{(p-1)}\subset \beta^{p-2}\CK^{p+1}(X)$ we have $\theta=\beta^{p-2}\tau$ for some $\tau\in \CK^{p+1}(X)$. The image of $\tau$ under the natural homomorphism $\CK^{p+1}(X)\to \CH^{p+1}(X)$ is equal to $\sigma$. Since $\theta\in A^{p+1}(X)_{(p-1)}\subset A^{3}(X)_{(1)}$, we have $\beta^{p-1}\tau=\beta\theta=0$ in $\CK^{2}(X)$.

As $\beta\theta\tau^{j-1}=0$, we have $\theta\tau^{j-1}\in A^{(p+1)j}(X)_{(p-1)}$ and the image of $\theta\tau^{j-1}$ under the differential $A^{(p+1)j}(X)_{(p-1)}\to C^{(p+1)j}(X)_{(p-1)}=\CH^{(p+1)j}(X)=(\Z/p\Z)\sigma^j$ is equal to $\sigma^j$.

We proved that the differentials $A^i(G)_{(p-1)}\to C^i(G)_{(p-1)}$ in the $(p-1)$th derivative $B^\bullet(G)_{(p-1)}$ are surjective for all $i>0$. As a consequence, $C^i(G)_{(p)}=0$ for all $i>0$. Since $A^i(G)_{(p)}=0$ for all $i\leq 0$, by Lemma \ref{props}(4), the $p$th derivative $B^\bullet(G)_{(p)}$ degenerates, i.e., $A^i(G)_{(p)}=0$ for all $i$.

It follows that the differentials
\[
A^{i-p+2}(G)_{(1)}=A^i(G)_{(p-1)}\to C^i(G)_{(p-1)}=C^i(G)_{(1)}=\CH^i(G)
\]
are isomorphisms for all $i>0$. As a consequence we get the following calculation:
\[
A^k(G)_{(1)}=
\left\{
  \begin{array}{ll}
    \Z/p\Z, & \hbox{if $k=3+(p+1)j$ for $j=0,1,\dots, p-2$;} \\
    0, & \hbox{otherwise.}
  \end{array}
\right.
\]
It implies that for every $j=0,1,\dots, p-2$ we have a sequence of isomorphisms
\[
\CK^{(p+1)j}(G)\xrightarrow[\sim]{\beta} \CK^{(p+1)j-1}(G)\xrightarrow[\sim]{\beta} \cdots
\xrightarrow[\sim]{\beta}\CK^{3+(p+1)(j-1)}(G)\simeq A^{3+(p+1)(j-1)}(G)_{(1)}=\Z/p\Z.
\]
In particular, the natural homomorphism $\CK^{p+1}(G)\to \CH^{p+1}(G)$ is an isomorphism and hence the element $\tau$ in $\CK^{p+1}(G)$ is unique. Our calculation yields:
\[
\CK^{i}(G)=
\left\{
  \begin{array}{ll}
    \Z, & \hbox{if $i\leq 0$;} \\
    (\Z/p\Z)\beta^k\tau^j, & \hbox{if $i=(p+1)j-k$ for $1\leq j\leq p-1$ and $0\leq k\leq p-2$;} \\
    0, & \hbox{otherwise.}
  \end{array}
\right.
\]
All in all, we have the following formula:
\[
\CK^\bullet(G)=\Z[\beta,\tau]/(p\tau,\tau^p, \beta^{p-1}\tau).
\]
\end{example}

\section{Equivariant connective \texorpdfstring{$K$}{K}-theory}\label{equivar}

Let $G$ be an algebraic group and $X$ a $G$-variety over $F$. Considering the $K$-groups of the categories of $G$-equivariant coherent $\cO_X$-modules with support of bounded dimension (see \cite[\S 3]{KM19b}) one gets an exact couple leading to a BGQ type spectral sequence and an endo-module $B_\bullet(G,X)$ with the first derivative groups $\CK_\bullet(G,X)$ the equivariant connective $K_0$-groups of $X$. The endo-module $B_\bullet(G,X)$ is stable but it is not bounded below in general.

In the case $X=\Spec(F)$ we write $\CK^\bullet(\B G)$ for $\CK^\bullet(G,X)$, $\CH^\bullet(\B G)$ for $\CH^\bullet(G,X)$, etc. The category of $G$-equivariant coherent $\cO_X$-modules in this case is the category of finite dimensional representations of $G$ and hence $K'_0(\B G)$ coincides with the representation ring $R(G)$ of $G$. In particular, we have surjective homomorphisms
\[
\varphi^i:\CH^i(\B G)\tto R(G)^{(i/i+1)},
\]
where $\CH^i(\B G)$ are the equivariant Chow groups (defined by Totaro in \cite{Totaro99}). The (topological) filtration on $R(G)$ was defined in \cite{KM19c}. The following example illustrates how the calculation of equivariant connective groups $\CK^i(\B G)$ allows us to determine the differentials in the endo-module.

Assume that $\ch(F)\neq 2$ and $G=\O^+_n$ the split \emph{special orthogonal} group of odd degree $n$. It is known (see \cite[Theorem 5.1]{RAA06}) that
\[
\CH(\B G)=\Z[c^{\CH}_2,c^{\CH}_3,\dots ,c^{\CH}_n]/(2c^{\CH}_{odd}).
\]
and
\[
R(G)=\Z[c^K_2,c^K_4,\dots, c^K_{n-1}],
\]
where $c^{\CH}$ and $c_i^K$ are the classical and $K$-theoretic Chern classes respectively.
The term $R(G)^{(i)}$ of the topological filtration on $R(G)$ is generated by monomials in the Chern classes of degree at least $i$. The homomorphism
\[
\varphi^*:\CH^*(\B G)\tto R(G)^{(*/*+1)}
\]
takes $c^{\CH}_i$ to the class of $c^{K}_i$ if $i$ is even and to $0$ if $i$ is odd. In particular, $\Ker(\varphi^*)$ is generated by $c^{\CH}_{i}$ with $i\geq 3$ odd (see \cite[Example 5.3]{KM19c}).

The same reasoning to prove that the ring $\CH(\B G)$ is generated by Chern classes in \cite[\S 15]{Totaro99} can be applied to show that $\CK(\B G)$ is also generated by $\CK$-theoretic Chern classes $c_1,c_2,\dots, c_n$. We determine relations between Chern classes.

\begin{lemma}
\label{chernformula}
Let $E$ be a rank $r$ vector bundle over a variety $X$. For any $i \in \Z$ and $j\in \{0,\dots,r\}$, we have, as homomorphisms $\CK_i(X) \to \CK_{i-j}(X)$
\[
c_j(E^\vee) = [\det E] \cdot \sum_{l=j}^r (-1)^l\binom{l}{j} \beta^{l-j} c_l(E).
\]
\end{lemma}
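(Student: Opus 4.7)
The plan is to use the splitting principle to reduce to a direct sum of line bundles, and then verify the resulting identity by a direct computation with Chern roots.

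First I would pass to the full flag bundle $f\colon Y\to X$ of $E$, over which $f^*E$ splits as a direct sum of line bundles $L_1\oplus\cdots\oplus L_r$. An iterated projective bundle formula for connective $K$-theory makes the flat pullback $f^*\colon\CK_\bullet(X)\to\CK_{\bullet+\dim(Y/X)}(Y)$ split injective, and both Chern class operations and the $K_0$-action by $[\det E]$ commute with flat pullback. Thus it suffices to check the identity when $E=L_1\oplus\cdots\oplus L_r$. In that case I would set $x_i=c_1(L_i)$; the Whitney sum formula gives $c_l(E)=e_l(x_1,\ldots,x_r)$ and $c_j(E^\vee)=e_j(c_1(L_1^\vee),\ldots,c_1(L_r^\vee))$, while $[\det E]=[L_1]\cdots[L_r]$.

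Next, I would use the defining relation for first Chern classes in connective $K$-theory, $\beta\cdot c_1(L)=1-[L^\vee]$, viewed as an operator identity on $\CK_\bullet$. Applying this symmetrically to $L$ and $L^\vee$ and using $[L][L^\vee]=1$ yields $c_1(L)+c_1(L^\vee)=\beta\cdot c_1(L)c_1(L^\vee)$, and hence
\[
c_1(L^\vee)=-[L]\cdot c_1(L),
\]
where $[L]=(1-\beta c_1(L))^{-1}$ makes sense as an operator since $1-[L]$ acts locally nilpotently (compare Lemma \ref{lemm:nilpotent}). Substituting this into the expression above gives
\[
c_j(E^\vee)=(-1)^j\sum_{|S|=j}\prod_{i\in S}[L_i]\,x_i.
\]

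Finally, dividing the desired equality by $[\det E]=\prod_i[L_i]$ and using $[L_i]^{-1}=[L_i^\vee]=1-\beta x_i$, I would reduce the lemma to the symmetric polynomial identity
\[
(-1)^j\sum_{|S|=j}\prod_{i\in S}x_i\prod_{i\notin S}(1-\beta x_i)=\sum_{l=j}^r(-1)^l\binom{l}{j}\beta^{l-j}e_l(x_1,\ldots,x_r),
\]
which is verified by expanding $\prod_{i\notin S}(1-\beta x_i)=\sum_{T\subseteq S^c}(-\beta)^{|T|}\prod_{i\in T}x_i$ and reindexing the resulting double sum over pairs $(S,T)$ by their disjoint union $U=S\sqcup T$ (each $U$ of size $l\geq j$ contributes with multiplicity $\binom{l}{j}$, accounting for the choice of $S\subseteq U$). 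The main obstacle is the splitting principle step: one must justify the projective bundle decomposition and the compatibility of Chern class operations with flat pullback in $\CK_\bullet$ for a possibly singular $X$. Once that is granted, the line bundle identity and the combinatorial step are both straightforward.
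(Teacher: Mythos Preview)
Your approach is correct and shares the same overall strategy as the paper's proof: reduce via the splitting principle and exploit the line-bundle identity $c_1(L^\vee)=-[L]\cdot c_1(L)$. The execution differs in two respects worth noting.

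First, the paper repackages the right-hand side as $\rho(E)=[\det E]\cdot\sum_l c_l(E)(-1-\beta)^l$ and observes that both $E\mapsto c(E^\vee)$ and $E\mapsto\rho(E)$ are multiplicative on short exact sequences. This lets the paper reduce directly to rank $r=1$, bypassing your symmetric-polynomial computation entirely. Your reindexing argument over $U=S\sqcup T$ is perfectly fine, but the multiplicativity observation is a shortcut you might want to remember.

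Second, rather than invoking the operator identity $\beta\,c_1(L)=1-[L^\vee]$ on $\CK_\bullet$ (which you take as a defining relation), the paper reduces the rank-one check to $K_0'(X)$ by a small trick: since $\CK_i(X)$ is generated by push-forwards from closed subschemes $Z$ of dimension at most $i$, one may assume $\dim X\le i$, in which case $\beta\colon\CK_{i-1}(X)\to\CK_i(X)=K_0'(X)$ is injective, and it suffices to verify $\id=[L]\cdot(\id-c_1(L))$ and $c_1(L^\vee)=-[L]\cdot c_1(L)$ as endomorphisms of $K_0'(X)$, where $c_1(L)=1-[L^\vee]$ is immediate. This sidesteps any appeal to the formal group law or to the $\CK$-level compatibility between the $K_0(X)$-action and Chern classes, which is the one place your argument leans on unproved (though standard) input.
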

\begin{proof}
For any rank $s$ vector bundle $M$ over a variety $Y$, consider the homomorphism
\[
\rho(M) = [\det M] \cdot \sum_{l=0}^s c_l(M) (-1-\beta)^l \colon \CK(Y) \to \CK(Y).
\]
The conclusion of the lemma may be reformulated as $c(E^\vee) = \rho(E)$. If $0 \to M' \to M \to M'' \to 0$ is an exact sequence of vector bundles of constant ranks, then $c(M^\vee) = c(M'^\vee)\circ c(M''^\vee)$ and $\rho(M) = \rho(M') \circ \rho(M'')$. Thus by the splitting principle, we may assume that $r=1$. Since $\CK_i(X)$ is generated by the images of the push-forward homomorphisms $\CK_i(Z) \to \CK_i(X)$, where $Z \subset X$ is closed subscheme of dimension at most $i$, we may assume that $\dim X \leq i$. Then the homomorphism $\beta \colon \CK_{i-1}(X) \to \CK_i(X) = K_0'(X)$ is injective, hence it will suffice to prove that
\[
\id = [E] \cdot (\id - c_1(E)) \quad \text{ and } \quad c_1(E^\vee) = -[E] \cdot c_1(E)
\]
as endomorphisms of $K_0'(X)$. This follows at once from the formula $c_1(L) = 1 -[L^\vee] \in K_0(X)$, valid for any line bundle $L$ over $X$ (in particular for $L=E$ and $L=E^\vee$).
\end{proof}

For every $i\geq 1$, let $Q^i$ be the subgroup of $\CK^i(\B\O^+_n)$ generated by $\beta^j c_{i+j}$ over all $j\geq 0$. Write $Q_{even}^i$ for the subgroup of $Q^i$ generated by $\beta^j c_{i+j}$ with $i+j$ even. Obviously, $\beta Q^i\subset Q^{i-1}$ and $\beta Q_{even}^i\subset Q_{even}^{i-1}$.

\begin{proposition}\label{tilde}
For every odd $i=1,3,\ldots, n$,
\begin{enumerate}
  \item $Q^{i-1}=Q_{even}^{i-1}$,
  \item There is an element $\tilde c_i\in c_i+ Q_{even}^i$ such that $2\tilde c_i=0$ and $\beta\tilde c_i=0$.
\end{enumerate}
\end{proposition}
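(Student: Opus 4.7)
The plan is to derive explicit Chern class relations from the self-duality of the standard representation of $\O_n^+$ and use them to prove both parts.

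\textbf{Setup.} The standard rank-$n$ representation $V$ of $\O_n^+$ satisfies $V\cong V^\vee$ and $\det V=1$. Applying Lemma~\ref{chernformula} to $V$ gives, for each $j\in\{0,\ldots,n\}$,
\[
c_j=\sum_{l=j}^n(-1)^l\binom{l}{j}\beta^{l-j}c_l \quad\text{in } \CK^\bullet(\B\O_n^+).
\]
Call $\rho_j$ this relation; it rearranges to $2c_j=\sum_{m\geq 1}(-1)^{m+1}\binom{j+m}{j}\beta^m c_{j+m}$ when $j$ is odd, and the analogous sum vanishes when $j$ is even. In particular $c_1=0$, making the case $i=1$ trivial (take $\tilde c_1=0$).

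\textbf{Proof of (1).} Fix $i$ odd. I show by descending induction on $k$ (ranging over $0\leq k\leq(n-i)/2$) that each odd-parity generator $\beta^{2k+1}c_{i+2k}$ of $Q^{i-1}$ lies in $Q^{i-1}_{even}$. For fixed $k$, the relations $\beta^{2k}\rho_{i+2k-1}=0$ and $\beta^{2k+1}\rho_{i+2k}=0$ express, respectively, $(i+2k)\beta^{2k+1}c_{i+2k}$ and $2\beta^{2k+1}c_{i+2k}$ as $\Z$-linear combinations of generators $\beta^m c_{i-1+m}$ with $m\geq 2k+2$. Since $i+2k$ is odd and $\gcd(2,i+2k)=1$, a Bezout combination isolates $\beta^{2k+1}c_{i+2k}$ itself as such a combination; by the induction hypothesis, the odd-parity summands lie in $Q^{i-1}_{even}$, as do the even-parity ones by definition. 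The base case $i+2k=n$ gives $\beta^{2k+1}c_n=0$ directly, since higher terms vanish by $c_{n+1}=0$.

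\textbf{Proof of (2).} The Bezout argument in (1) provides an explicit expression $\beta c_i=\sum_{k\geq 1}d_k\beta^{2k}c_{i+2k-1}$ in $Q^{i-1}_{even}$ (with no $c_{i-1}$ contribution, since none appears in the Bezout combination). Setting $w:=-\sum_{k\geq 1}d_k\beta^{2k-1}c_{i+2k-1}\in Q^i_{even}$ and $\tilde c_i:=c_i+w$ yields $\beta\tilde c_i=0$ immediately. For $2\tilde c_i=0$, I use $\rho_i$ together with the relations $\beta^{m-1}\rho_{i+m-1}$ (for $i+m-1$ even) to cancel the even-$m$ terms in $2c_i$. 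The key combinatorial input is Kummer's theorem: when both $i$ and $m$ are odd, the base-$2$ addition of $i$ and $m$ produces at least one carry, so $\binom{i+m}{i}$ is even; consequently the residual sum of odd-$m$ contributions is $2$-divisible and matches $-2w$ after reducing via the above even-$j$ relations. The procedure terminates because $c_l=0$ for $l>n$.

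\textbf{Main obstacle.} The principal difficulty lies in step (2), namely ensuring that the \emph{same} $w\in Q^i_{even}$ delivers both $\beta\tilde c_i=0$ and $2\tilde c_i=0$. These two vanishings correspond to distinct combinations of relations ($\rho_{i-1}$ with $\beta\rho_i$ for the first, $\rho_i$ with the $\beta^{m-1}\rho_{i+m-1}$ for the second), and their compatibility must be verified by tracking how the Bezout coefficients $d_k$ from (1) interact with the Kummer-style 2-divisibility in (2). The hypothesis that $i$ is odd is essential throughout, via $\gcd(2,i+2k)=1$; the finiteness of $n$ guarantees termination of the iterative rewriting.
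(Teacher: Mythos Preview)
Your argument for (1) is correct and in fact somewhat more self-contained than the paper's: you prove $Q^{i-1}=Q^{i-1}_{even}$ directly by descending induction on $k$, whereas the paper interleaves (1) and (2) in a single descending induction on $i$ and deduces (1) at level $i$ from (2) at level $i$ (via $\beta\tilde c_i=0\Rightarrow \beta c_i\in\beta Q^i_{even}$).

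The gap is in (2). Your Kummer observation is correct but insufficient: it shows that the \emph{initial} odd-$m$ coefficients $\binom{i+m}{i}$ in $\rho_i$ are even, but the rewriting of the even-$m$ terms does not terminate there. The relation $\beta^{m-1}\rho_{i+m-1}$ only gives you control of $(i+m)\beta^m c_{i+m}$, not of $\binom{i+m}{i}\beta^m c_{i+m}$; to isolate $\beta^m c_{i+m}$ you must again invoke the Bezout combination with $\beta^m\rho_{i+m}$, and this iteration produces a cascade of new even-parity contributions whose coefficients are not governed by Kummer's theorem. You then assert that the resulting expression ``matches $-2w$,'' but $w$ was built from a specific sequence of Bezout coefficients for $\beta c_i$, and nothing in your sketch links those coefficients to the ones arising in the $2c_i$ computation. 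You have, in effect, identified the obstacle without removing it.

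The paper resolves this by using an ingredient you do not invoke: the ring map $\CK(\B G)\to R(G)$ together with the known structure $R(G)=\Z[c_2^K,c_4^K,\dots,c_{n-1}^K]$. From (1) at level $i+2$ one has $2c_i\in\beta Q^{i+1}=\beta Q^{i+1}_{even}\subset Q^i_{even}$, say $2c_i=\sum_{\text{even }j>i}a_j\beta^{j-i}c_j$. Mapping to $R(G)$ gives $2c_i^K=\sum a_j c_j^K$; since the even $c_j^K$ are algebraically independent generators, each $a_j$ is even, so $2c_i\in 2Q^i_{even}$ and one obtains $\tilde c_i$ with $2\tilde c_i=0$. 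Then $i\beta\tilde c_i\in Q^{i-1}_{even}$, the latter injects into the torsion-free ring $R(G)$, and $\beta\tilde c_i$ is $2$-torsion, forcing $i\beta\tilde c_i=0$ and hence $\beta\tilde c_i=0$. Your construction can be repaired by the same device, in the reverse order: once $\beta\tilde c_i=0$, note that $2\tilde c_i\in Q^i_{even}\cap\ker\beta$, and this intersection is trivial because $\beta$ embeds $Q^i_{even}$ into $Q^{i-1}_{even}\hookrightarrow R(G)$. Without an appeal to $R(G)$ (or an equivalent statement about linear independence of the $\beta^{j}c_{i+j}$ with $i+j$ even), the compatibility you flag as the main obstacle remains unproven.
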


\begin{proof}
We proceed by descending induction on $i$. Let $i=n$.
It follows from Lemma \ref{chernformula} that $c_n=-c_n$ and $c_{n-1}=c_{n-1}-n\beta c_{n}$, i.e., $n\beta c_{n}=0$. Setting $\tilde c_n=c_n$ we deduce that $2c_n=0$ and $\beta c_n=0$ since $n$ is odd.

The group $Q_{even}^{n-1}$ is generated by $c_{n-1}$ and $Q^{n-1}$ is generated by $c_{n-1}$ and $\beta c_n=0$, hence $Q^{n-1}=Q_{even}^{n-1}$.

$(i+2)\Rightarrow i$: It follows from Lemma \ref{chernformula} and the induction hypothesis that
\[
2c_i\in \beta Q^{i+1}=\beta Q_{even}^{i+1}= Q_{even}^{i},
\]
thus $2c_i=\sum_{even\ j>i} a_j\beta^{j-i}c_j$ with $a_j\in\Z$. Mapping to $R(G)$ we see that $2c^K_i=\Sum a_jc^K_j$ in $R(G)$. On the other hand, $c^K_i\in R(G)=\Z[c^K_2,c^K_4,\dots, c^K_{n-1}]$, hence all $a_j$ are even, therefore, $2c_i\in 2Q_{even}^{i}$. We deduce that there is $\tilde c_i\in c_i+ Q_{even}^i$ such that $2\tilde c_i=0$.

Lemma \ref{chernformula} for $c_{i-1}$ yields $i\beta c_{i}\in \beta^2 Q^{i+1}=\beta^2 Q_{even}^{i+1}\subset Q_{even}^{i-1}$ and therefore, $i\beta \tilde c_{i}\in  Q_{even}^{i-1}$. As $Q_{even}^{i-1}$ maps injectively to $R(G)$ and $\beta \tilde c_{i}$ maps to zero (since $\beta \tilde c_{i}$ is $2$-torsion and $R(G)$ is torsion-free), we have $i\beta \tilde c_{i}=0$. But $i$ is odd, hence $\beta \tilde c_{i}=0$.

It follows from $\tilde c_i\in c_i+ Q_{even}^i$ and $\beta \tilde c_{i}=0$ that $\beta c_i\in \beta Q_{even}^i\subset Q_{even}^{i-1}$. Finally,
\[
Q^{i-1} = \Z c_{i-1}+ \Z \beta c_i+ \beta^2 Q^{i+1}= \Z c_{i-1}+ \Z \beta c_i+ \beta^2 Q_{even}^{i+1} \subset Q_{even}^{i-1}.\qedhere
\]
\end{proof}

Note that since the Bott map $\CK^1(\B G)\to R(G)$ is injective and $R(G)$ is torsion free, the element $\tilde c_{1}$ is trivial. It follows from Proposition \ref{tilde} that the ring $\CK(\B G)$ is generated by $c_2, \tilde c_{3},c_4,\dots, \tilde c_{n}$ and $\beta$. Write $\tilde c_{i}=c_i$ for all even $i$.

Under the natural homomorphism $\CK(\B G)\to \CH(\B G)$ the class $\tilde c_i$ goes to $c^{\CH}_i$. It is immediate that the natural diagonal homomorphism $\CK(\B G)\to R(G)\times \CH(\B G)$ is injective. This implies two things: first the relations $2\tilde c_i=0$ and $\beta\tilde c_i=0$ are the defining relation between the $\tilde c_{i}$'s. In other words,
\[
\CK(\B G)=\Z[\tilde c_2,\tilde c_3,\dots, \tilde c_n,\beta]/(2\tilde c_{odd},\beta \tilde c_{odd}).
\]

Second, the derived endo-module of $B_\bullet(\B G)$ degenerates, i.e., all nonzero differentials appear in the first derivative only and therefore, the second derivative degenerates.

\bibliographystyle{acm}

\end{document}